\newcommand{\R}{\mathds{R}}
\newcommand{\N}{\mathds{N}}
\newcommand{\Z}{\mathds{Z}}
\newcommand{\C}{\mathcal{C}}
\theoremstyle{plain}
\newtheorem{thm}{Theorem}
\newtheorem{lem}{Lemma}
\newtheorem{prop}{Proposition}
\newtheorem{rmk}{Remark}
\renewenvironment{proof}{{\bfseries Proof.}}{\qed}
\title{An elapsed time model for strongly coupled inhibitory and excitatory neural networks}
\author{
Mar\'\i a J. C\'aceres\thanks{ Departamento de Matem\'atica Aplicada, Universidad de Granada,
E-18071 Granada, Spain.  Email:  caceresg@ugr.es }
\and
Beno\^\i t Perthame\thanks{Sorbonne Universit{\'e}, CNRS, Universit\'{e} de Paris, Inria, Laboratoire Jacques-Louis Lions UMR7598, F-75005 Paris, France. 
Email : Benoit.Perthame@sorbonne-universite.fr}
\and
Delphine Salort\thanks{Sorbonne Universit\'{e}, CNRS, Laboratoire de Biologie Computationnelle et Quantitative, UMR 7238, F-75005 Paris, France. Email: delphine.salort@sorbonne-universite.fr}
\and
Nicolas Torres\thanks{Sorbonne Universit{\'e}, CNRS, Universit\'{e} de Paris, Laboratoire Jacques-Louis Lions UMR7598, F-75005 Paris, France. 
Email : ???}
}
\date{March 2021}
\begin{document}

\maketitle

\begin{abstract}
    The elapsed time model has been widely studied in the context of mathematical neuroscience with many open questions left. The model consists of an age-structured equation that describes the dynamics of  interacting neurons structured by the elapsed time since their  last discharge. Our interest lies in  highly connected networks leading to strong nonlinearities where perturbation methods do not apply. To deal with this problem, we choose a particular case which can be reduced to delay equations.
    
    We prove a general convergence result to a stationary state in the inhibitory and the weakly excitatory cases. Moreover, we prove the existence of particular periodic solutions with jump discontinuities in the strongly excitatory case. Finally, we present some numerical simulations which ilustrate  various behaviors, which are consistent with the theoretical results.
\end{abstract}

\vskip .7cm

\noindent{\makebox[1in]\hrulefill}\newline
2010 \textit{Mathematics Subject Classification.}  82C32, 92B25, 35B10.
\newline\textit{Keywords and phrases.} Structured equations; Mathematical neuroscience; Neural networks; Periodic solutions; Delay differential equations.

\section{Introduction}
Understanding the dynamics of neural processes is  an interesting challenge from both mathematical and neuroscience viewpoint. Among the different models of neural assemblies, the elapsed time model has been widely studied  by several authors.  Long time convergence to steady state  is studied in  Pakdaman et al.  \cite{PPD, PPD3}, Cañizo et al.  \cite{canizo2019asymptotic}, Mischler et al. ~\cite{mischler2018}.  Periodic solutions for strongly excitatory cases are built in  Pakdaman et al. \cite{PPD2}. Modeling aspects  can be found in  Ly et al.~\cite{ly2009spike}, and Dumont et al. \cite{DH1, DH2}, Salort et al.~\cite{ST2020}.
These aspects include the relation with other neural models, convergence to steady states, synchronization phenomena and the existence of periodic solutions.
 For the derivation from microscopic models, see Chevallier et al.~\cite{chevallier2015microscopic} and Chevallier~\cite{chevallier2015mean}.

In the models we are interested in, neurons are subject to random discharges that interact with  the rest of the network and the population of neurons is described by their elapsed time since last discharge. More precisely, the dynamics are governed by the following nonlinear age-structured equation
\begin{equation}
\label{eq1}
\left\{
\begin{matrix*}[l]
\partial_t n+\partial_s n+p(s,N(t))n=0&t>0,s>0,\vspace{0.15cm}\\
N(t)= n(t,s=0)=\int_0^\infty p(s,N(t))n(t,s) \,ds&t>0,\vspace{0.15cm}\\
n(t=0,s)=n_0(s)\ge0&s\ge0,
\end{matrix*}
\right.
\end{equation}
where $n(t,\cdot)$ is the probability density of finding a neuron at time $t$, whose elapsed time since last discharge is $s\ge0$. The function $N(t)$ represents the flux of discharging neurons at time $t$, which corresponds to the activity of the network in this particular case. The function $p$ is the firing rate of neurons, which depends on the elapsed time $s$ and the activity $N$. We say that the network is inhibitory if $p$ is decreasing with respect to the activity
and excitatory if $p$ is increasing. Finally $n_0\in L^1_s$ is the initial data.

The case of strong nonlinearities has been investigated in  \cite{PPD2}, when the neurons only interact via the variation of the refractory period.  Here,  we  fix the refractory period and we assume that, after a fixed refractory state, the discharge rate of neurons follows an exponential law which parameter depends on the total activity via a smooth function $\varphi$. This leads to the following  particular form the firing rate $p$:
\begin{equation}
\label{pfire}
p(s,u)=\varphi(u)\mathds{1}_{\{s>\sigma\}},
\end{equation}
with  $\sigma>0$ the constant refractory period. We also assume that there exist two constants $p_0,\,p_\infty$ such that
\begin{equation}
\label{bdphi}
   0<  p_0\le\varphi(\cdot)\le p_\infty.
\end{equation}
For this particular form of the discharge rate $p$, the network is inhibitory when $\varphi'\le0$ and excitatory when $\varphi'>0$, 
in particular, is weakly excitatory if
$\varphi'$ is small.
Moreover, the System~\eqref{eq1} satisfies the mass conservation law, which reads
\begin{equation}
    \label{masscon}
    \int_0^\infty n(t,s)\,ds=\int_0^\infty n_0(s)\,ds=1,\quad\forall t\ge 0,
\end{equation}
and consequently we have the following bounds on $N$
\begin{equation*}
  0\leq N(t)\le p_\infty,\quad\forall t\ge0.
\end{equation*}
This specific form of $p$ allows us to investigate regimes with strong interactions as in \cite{PPD2}, reducing the problem to a delay equation (see \cite{diekmann2012delay,murray2007mathematical,smith2011introduction} for references).
Other standard methods, as Doeblin's theory in \cite{canizo2019asymptotic},   entropy method in $\cite{kang2015,PPD,PPD2,PPD3}$ or spectral methods in \cite{mischler2018}, provide results on exponential convergence to equilibrium but 
only for  weak nonlinearities.

For a general firing rate $p$ in System \eqref{eq1} it is conjectured that solutions converge to the unique steady state in the inhibitory case, whereas periodic solutions may arise in the excitatory case. This article is concerned to prove these conjectures for the specific form of the firing rate in \eqref{pfire}. 

The article is organized as follows. In section \ref{delay}, we show that System \eqref{eq1} can be studied through a delay differential equation. Moreover from a given periodic activity $N(t)$ solving the delay equation, we can recover a solution of System \eqref{eq1} by using the arguments in \cite{PPD2}. In section \ref{convergence}, we prove convergence to equilibrium in the inhibitory case and the weakly excitatory case. In addition we prove a  monotone convergence result in the excitatory case under certain conditions. Regarding periodic solutions, we prove in section \ref{periodic} the existence of piece-wise constant $\sigma$-periodic and with more elaborate arguments we also prove the existence of $2\sigma$-periodic solutions, which are piece-wise monotone. Finally, in section \ref{numerical} we show numerical examples with several possible behaviors like multiplicity of solutions, convergence to equilibrium in different ways and periodic solution with jump discontinuities.

\section{Reduction to a delay differential equation}
\label{delay}

For analysis purposes, we define the following function $\psi\colon[0,\infty)\to[0,\infty)$ as
\begin{equation}
\label{psi}
\displaystyle\psi(u)\coloneqq\frac{u}{\varphi(u)},
\qquad \textrm{ therefore }\qquad \psi'(u)=\frac{\varphi(u)-u\varphi'(u)}{\varphi^2(u)}.
\end{equation}
This function plays an important role in the study of System
\eqref{eq1}, because it can be reduced to a delay diffe\-ren\-tial
equation,
as one can see in the following lemma:
\begin{lem} We assume \eqref{pfire} and  \eqref{bdphi}.
For $t>\sigma$ the discharge flux $N(t)$ satisfies:
\begin{equation}
\label{mass}
    \int_{t-\sigma}^{t} N(s)\,ds+\psi(N(t))=1.
\end{equation}
Moreover, if $N(t)$ is smooth for $t>\sigma$, the following formula for $N'(t)$
holds:
\begin{equation}
\label{N'}
N(t)-N(t-\sigma)+\psi'(N(t))N'(t)=0.
\end{equation}
In the same way, for $0< t <\sigma$ the discharge flux satisfies
\begin{equation}
    \label{mass2}
    \int_0^t N(s)\,ds+\int_0^{\sigma-t} n_0(s)\,ds+\psi(N(t))=1,
\end{equation}
and if $N(t)$ is smooth we have the following formula for $N'(t)$:
\begin{equation}
    \label{N'2}
    N(t)-n_0(\sigma-t)+\psi'(N(t))N'(t)=0.
\end{equation}
\label{lm:delay}
\end{lem}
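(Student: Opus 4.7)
The plan is to exploit the fact that the firing rate $p(s,u) = \varphi(u)\mathds{1}_{\{s>\sigma\}}$ vanishes on $\{s<\sigma\}$, so the transport equation for $n$ has no reaction term in this region and the method of characteristics gives $n$ there explicitly in terms of the boundary data $N$ or the initial condition $n_0$.

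First, I would rewrite the boundary condition. Using \eqref{pfire},
\begin{equation*}
N(t) = \int_0^\infty p(s,N(t))\,n(t,s)\,ds = \varphi(N(t))\int_\sigma^\infty n(t,s)\,ds,
\end{equation*}
so by the definition \eqref{psi} of $\psi$,
\begin{equation*}
\int_\sigma^\infty n(t,s)\,ds = \psi(N(t)).
\end{equation*}
Combining this with the mass conservation \eqref{masscon} yields
\begin{equation*}
\int_0^\sigma n(t,s)\,ds = 1-\psi(N(t)).
\end{equation*}

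Next, I would solve for $n(t,s)$ on $\{s<\sigma\}$ by characteristics. Since $p\equiv 0$ there, the equation reduces to $\partial_t n+\partial_s n = 0$, so $n$ is constant along the lines $t-s = \text{const}$. For $t>\sigma$ and $0<s<\sigma$ the characteristic hits the boundary $s=0$ at time $t-s>0$, giving $n(t,s) = N(t-s)$. Plugging this into the identity above and changing variables $u = t-s$ gives
\begin{equation*}
\int_{t-\sigma}^{t} N(u)\,du + \psi(N(t)) = 1,
\end{equation*}
which is \eqref{mass}. For $0<t<\sigma$, the characteristic through $(t,s)$ with $s<t$ still hits $s=0$ at time $t-s>0$, so $n(t,s)=N(t-s)$, while for $t<s<\sigma$ it hits $t=0$ at age $s-t$, so $n(t,s)=n_0(s-t)$. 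Splitting $\int_0^\sigma$ at $s=t$ and changing variables gives \eqref{mass2}.

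The ODE forms \eqref{N'} and \eqref{N'2} follow immediately by differentiating \eqref{mass} and \eqref{mass2} in $t$, using the fundamental theorem of calculus and the chain rule on $\psi(N(t))$; smoothness of $N$ is assumed precisely so this differentiation is legitimate. I do not expect a real obstacle: the only subtlety is justifying the representation $n(t,s)=N(t-s)$ (resp.\ $n_0(s-t)$) on $\{s<\sigma\}$ in a weak enough sense to handle general $L^1$ initial data, which is standard for this transport equation and already implicit in the well-posedness of \eqref{eq1}.
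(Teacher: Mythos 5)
Your argument is correct and follows essentially the same route as the paper: rewrite the boundary condition as $N(t)=\varphi(N(t))\int_\sigma^\infty n(t,s)\,ds$, use mass conservation to replace $\int_\sigma^\infty n$ by $1-\int_0^\sigma n$, and evaluate $n$ on $\{s<\sigma\}$ by characteristics (the paper's proof is just a terser version of yours). The differentiation step for \eqref{N'} and \eqref{N'2} is likewise the intended one.
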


\begin{proof}
Using assumption \eqref{pfire}, the equation for $N(t)$ is rewritten as 
$$
N(t)=\varphi(N(t))\int_\sigma^\infty n(t,s)\,ds.
$$
And using the mass-conservation property and the method of characteristics
for $t > \sigma$,  we obtain:
$$
\int_\sigma^\infty n(t,s)\,ds =1- \int_0^\sigma  N(t-s)\,ds.
$$
Therefore, we get the first part of Lemma~\ref{lm:delay}. The result for $0<t<\sigma$ is proved in the same way.
\end{proof}

\

The sign of  $\psi'$ plays a crucial role in the behavior of the system
(see equation \eqref{N'}).
We will prove that complex dynamics can only occur when $\psi'$ changes sign. This does not happen for the inhibitory case, because
$
\psi'>0\quad\textrm{on}\quad [0,p_\infty],
$
and, in the  excitatory case if $\psi'>0$, on $[0,p_\infty]$.
In this latter case,  we say that the network is {\em weakly excitatory}.
For instance, this holds  if $0<\varphi'\le p_0/p_\infty$.
Otherwise, we say that the network is {\em strongly excitatory},
if $\psi'$ changes sign (and $\varphi'>0$).
\\

In the following theorem we show how to recover solutions of the original renewal model from a given activity that is a solution of
the integral equation~\eqref{mass}.

\begin{thm}[\bf Reconstructing a solution of \eqref {eq1} from a general activity]
  Assume \eqref{bdphi}. Let $N\in L^\infty(0,\infty)$ a non-negative function and satisfying  $\psi(N(t))\in \C([\sigma, \infty))\cap \C^1((0,\sigma))$ and the following conditions:
\begin{enumerate}
    \item \label{1st}$\widetilde{n}(t)\coloneqq N(\sigma-t)+\frac{d(\psi(N))}{dt}(\sigma-t)\ge0$ \quad  for \quad  $0<t<\sigma$,
    \item \label{2nd}$\int_{t-\sigma} ^t N(s)\,ds + \psi(N(t))=1$ \quad for   $t \geq \sigma$,  i.e.,  
      $\int_{0} ^\sigma N(s)\,ds + \psi(N(\sigma))=1$ and
      $N$ is solution of the integral equation \eqref{mass}.
\end{enumerate}
Then for any initial probability density $n_0$ satisfying
$n_0(s)=\widetilde{n}(s)$ for $0<s<\sigma$, the solution $n$ of the linear problem
\begin{equation}
\label{nlinear}
\left\{
\begin{matrix*}[l]
\partial_t n+\partial_s n+p(s,N(t))n=0&t>0, \, s>0,\vspace{0.15cm}\\
n(t,s=0)=N(t)&t>0,\vspace{0.15cm}\\
n(t=0,s)=n_0(s) &s>0,
\end{matrix*}
\right.
\end{equation}
determines a solution of Equation \eqref{eq1} with $N(t)$ as activity.
\label{th:reconstruction}
\end{thm}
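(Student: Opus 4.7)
The plan is to check that the solution $n$ of the linear equation \eqref{nlinear} satisfies the nonlinear compatibility condition $n(t,0)=\int_0^\infty p(s,N(t))n(t,s)\,ds$. Since $n(t,0)=N(t)$ by construction and $p(s,N)=\varphi(N)\mathds{1}_{s>\sigma}$, the whole statement boils down to verifying the identity
\[
M(t):=\int_\sigma^\infty n(t,s)\,ds=\psi(N(t))\qquad\text{for all } t\ge0.
\]

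First I would compute $Q(t):=\int_0^\sigma n(t,s)\,ds$ explicitly via characteristics. Because $p\equiv 0$ on $[0,\sigma)$, the linear equation reduces to pure transport there, so $n(t,s)=N(t-s)$ when $s<t$ and $n(t,s)=n_0(s-t)$ when $s>t$. For $t\ge\sigma$, this immediately gives $Q(t)=\int_{t-\sigma}^t N(u)\,du$, and assumption \eqref{2nd} yields $Q(t)=1-\psi(N(t))$. For $0<t<\sigma$, one gets $Q(t)=\int_0^t N(u)\,du+\int_0^{\sigma-t}n_0(u)\,du$; substituting $n_0=\widetilde n$ from assumption \eqref{1st} and applying the change of variable $v=\sigma-u$ turns the second integral into $\int_t^\sigma N(v)\,dv+\psi(N(\sigma))-\psi(N(t))$. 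Adding the two pieces and using \eqref{mass} at $t=\sigma$ to absorb $\int_0^\sigma N+\psi(N(\sigma))=1$, one obtains again $Q(t)=1-\psi(N(t))$, so this relation holds for all $t>0$.

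Next, I would control $M$ by a mass-conservation argument applied to the linear problem. Set $e(t)=\int_0^\infty n(t,s)\,ds-1$; since $n_0$ is a probability density, $e(0)=0$. Differentiating along \eqref{nlinear},
\[
e'(t)=n(t,0)-\int_0^\infty p(s,N(t))n(t,s)\,ds=N(t)-\varphi(N(t))M(t).
\]
Writing $M(t)=(e(t)+1)-Q(t)=e(t)+\psi(N(t))$ and using $\varphi(N)\psi(N)=N$ collapses the right-hand side to $-\varphi(N(t))e(t)$. Since $\varphi\ge p_0>0$ is bounded, Gronwall gives $e\equiv 0$, hence $M(t)=\psi(N(t))$, which is exactly the nonlinear renewal condition $n(t,0)=\int_0^\infty p(s,N(t))n(t,s)\,ds$.

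The only delicate point is the $t\in(0,\sigma)$ computation of $Q$: one has to track the derivative $(\psi\circ N)'$ appearing in $\widetilde n$ carefully through the change of variable and recognise that the constant of integration is exactly cancelled by evaluating the integral equation \eqref{mass} at $t=\sigma$. Once this bookkeeping is in place, the ODE step for $e(t)$ is routine. A minor technical remark is that, since $N$ is only $L^\infty$, one works with mild solutions along characteristics and the mass equation is to be read in the absolutely continuous sense, which does not affect the argument.
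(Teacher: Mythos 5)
Your proposal is correct and follows essentially the same route as the paper: both reduce the statement to showing that the total mass of the linear solution stays equal to $1$, using characteristics together with conditions \ref{1st} and \ref{2nd} to identify $\int_0^\sigma n(t,s)\,ds$ with $1-\psi(N(t))$, and then closing the argument with the ODE $\frac{d}{dt}\int_0^\infty n = \varphi(N)\left(1-\int_0^\infty n\right)$. The only cosmetic difference is that you isolate the identity for $Q(t)$ first and run Gronwall on $e=M-1$, whereas the paper substitutes the same identity directly into the derivative of the total mass.
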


From this theorem, we deduce that the behavior of $N(t)$ in Equation \eqref{eq1} is determined just by the initial data $n_0$ on $(0,\sigma)$ as long as it is a probability density.

\begin{proof}
In order to prove that the solution $n$ of Equation \eqref{nlinear} is actually a solution of Equation \eqref{eq1}, we must verify the following conditions for all $t>0$. 

\begin{equation}
\label{condsol}
\psi(N(t))=\int_\sigma^\infty n(t,s)\,ds
\quad
\mbox{and}
\quad
\int_0^\infty n(t,s)\,ds=1.
\end{equation}

Consider $M(t)\coloneqq\int_0^\infty n(t,s)\,ds$ and observe that
\begin{equation}
    \begin{split}
        M'(t)&=-\int_0^\infty\partial_sn(t,s)\,ds-\int_0^\infty p(s,N(t))n(t,s)\,ds\\
        &=N(t)-\varphi(N(t))\int_\sigma^\infty n(t,s)\,ds\\
        &=N(t)-\varphi(N(t))M(t)+\varphi(N(t))\int_0^\sigma n(t,s)\,ds.
    \end{split}
\end{equation}

For $0<t<\sigma$ we deduce from conditions \ref{1st} and \ref{2nd} that Equation \eqref{mass2} holds and by the method of characteristics we get that $M(t)$ satisfies that
\begin{equation}
    \begin{split}
        M'(t)&=N(t)-\varphi(N(t))M(t)+\varphi(N(t))\left(\int_0^t n(t,s)\,ds+\int_t^\sigma n(t,s)\,ds\right)\\
        &=N(t)-\varphi(N(t))M(t)+\varphi(N(t))\left(\int_0^t N(s)\,ds+\int_0^{\sigma-t} n_0(s)\,ds\right)\\
        &=N(t)-\varphi(N(t))M(t)+\varphi(N(t))\left(1-\psi(N(t))\right)\\
        &=\varphi(N(t))(1-M(t)).
    \end{split}
\end{equation}
Similarly for $t>\sigma$ we deduce from condition \ref{2nd} and the method of characteristics that for a.e. $t>\sigma$ the following equality holds
\begin{equation}
\begin{split}
    M'(t)&=N(t)-\varphi(N(t))M(t)+\varphi(N(t))\int_{t-\sigma}^t N(s)\,ds\\
         &=\varphi(N(t))\,(1-M(t)).
    \end{split}
\end{equation}

Therefore we obtain that $M(t)$ satisfies the following differential equation for a.e. $t>0$.  
\begin{equation*}
    \label{M'}
    \left\{
    \begin{matrix*}[l]
   M'(t)=\varphi(N(t))(1-M(t)),\vspace{0.15cm}\\
    M(0)=1.
    \end{matrix*}
    \right.
\end{equation*}
This implies that $M(t)\equiv1$ and we conclude \eqref{condsol} for all $t>0$, which implies that $n$ is a solution of~\eqref{eq1} with $N(t)$ as activity.
\end{proof}

In particular when the activity is periodic we can construct periodic solutions of \eqref{eq1}, using only the Lipschitz continuity of $\psi(N(t))$ induced by~\eqref{bdphi} and~\eqref{mass}.

\begin{thm}[\bf Reconstruction of a solution from a periodic activity]
\label{periodicn}
 Assume \eqref{bdphi}. For $T>0$, consider a non-negative $T$-periodic solution $N(t)$ of the integral equation \eqref{mass} for all $t\in\R$. Then, there exists a unique $T$-periodic solution $n$ of the linear Equation
\eqref{nlinear}, which corresponds to a solution of the nonlinear
System \eqref{eq1}.
\end{thm}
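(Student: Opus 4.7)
The plan is to construct the periodic density $n$ explicitly via the method of characteristics for the linear problem \eqref{nlinear} with $N$ as boundary datum, and then verify that the resulting function actually solves the nonlinear system \eqref{eq1}. Uniqueness will follow immediately from the linear transport structure and requires no additional machinery.

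First, for every $t \in \R$ and $s \geq 0$, I would set
$$
n(t,s) := N(t-s)\, \exp\!\Bigl(-\int_0^s \varphi\bigl(N(t-s+u)\bigr)\,\mathds{1}_{\{u > \sigma\}}\, du\Bigr).
$$
Since $N$ is defined on all of $\R$, non-negative, bounded, and $T$-periodic, this formula yields a non-negative function that is $T$-periodic in $t$ and, by construction, solves the linear equation \eqref{nlinear} along characteristics with boundary value $n(t,0) = N(t)$.

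Next, I would verify that $n$ corresponds to a solution of the nonlinear system \eqref{eq1}, which reduces to showing $M(t) := \int_0^\infty n(t,s)\,ds \equiv 1$; the nonlinear boundary relation then follows from \eqref{mass} combined with $\int_0^\sigma n(t,s)\,ds = \int_{t-\sigma}^t N(u)\,du$. Integrating the PDE in $s$ and invoking \eqref{mass} on all of $\R$, exactly as in the proof of Theorem \ref{th:reconstruction}, gives
$$
M'(t) = \varphi(N(t))\bigl(1 - M(t)\bigr),
$$
whose general solution is $M(t) = 1 + (M(t_0) - 1)\exp\!\bigl(-\int_{t_0}^t \varphi(N(\tau))\,d\tau\bigr)$. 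Since $M$ inherits $T$-periodicity from $n$ and $\int_0^T \varphi(N)\,d\tau \geq p_0 T > 0$ by \eqref{bdphi}, the only admissible value is $M \equiv 1$, so $n$ is a solution of \eqref{eq1} with activity $N$.

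For uniqueness, given two $T$-periodic solutions $n_1, n_2$ of \eqref{nlinear}, their difference $w := n_1 - n_2$ satisfies the transport--absorption equation on $\R \times (0,\infty)$ with $w(t,0) = 0$; every characteristic in this domain reaches $\{s = 0\}$ at the finite backward time $t - s$, where the boundary data vanishes, so $w \equiv 0$. The conceptual heart of the argument is the periodic-selection step forcing $M \equiv 1$, which crucially requires \eqref{mass} on all of $\R$ together with the positive lower bound on $\varphi$; everything else is a direct method-of-characteristics computation.
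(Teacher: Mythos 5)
Your proof is correct and follows essentially the same route as the paper: the explicit characteristics formula for $n$, followed by the ODE $M'(t)=\varphi(N(t))(1-M(t))$ to force $M\equiv 1$, and uniqueness from the transport structure. Your periodic-selection step (using $T$-periodicity of $M$ together with $\int_0^T\varphi(N)\,d\tau\ge p_0T>0$ to conclude $M\equiv 1$) is in fact a welcome explicit completion of a detail the paper leaves implicit when it says to ``replicate the argument'' of Theorem~\ref{th:reconstruction}, since in the periodic setting there is no initial condition $M(0)=1$ to invoke.
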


\begin{proof}
By periodicity we can extend the solution of Equation \eqref{nlinear} for all $t\in\R$ and from the method of characteristics we find a solution given by
$$n(t,s)=N(t-s)e^{-\int_0^s p(s',N(s'+t-s))\,ds'},$$
thus we can consider as initial data
$$n(t=0,s)=N(-s)e^{-\int_0^s p(s',N(s'-s))\,ds'},$$
which determines the unique $T$-periodic solution of \eqref{nlinear}. Therefore we can replicate the argument in the proof of Theorem \ref{th:reconstruction} in the case $t>\sigma$ to conclude the result.
\end{proof}


\section{Convergence to equilibrium}
\label{convergence}

By using the delay equation \eqref{N'},  we prove that solutions of System \eqref{eq1} converge to  a steady state in two cases.
In the inhibitory and weakly excitatory case, the  steady state is unique  and we prove global convergence. In the strongly excitatory case,  we prove a local convergence theorem with specific assumptions on $\psi$ and for the smooth solution.  For the function $p$ defined in \eqref{pfire}, these results extend the convergence beyond the standard  case of weak interconnections.

We recall here that in both the inhibitory and the weakly excitatory cases
$\psi'$ satisfies:
\begin{equation}
\label{psi'}
\psi'(u)=\frac{\varphi(u)-u\varphi'(u)}{\varphi^2(u)}>0,
\quad \mbox{for} \  u\in[0,p_\infty],
\end{equation}
while in the strongly excitatory case  $\psi'$  changes sign.
We know from \eqref{psi} and \eqref{mass} that there exists a unique steady state determined by $N^*$, which is the unique solution of the equation
\begin{equation}
\label{neq}
\sigma N^*+\psi(N^*)=1,
\ \mbox{and therefore,}
\quad
n^*=N^* e^{-\int_0^s p(s',N^*)\,ds'}.
\end{equation}
In the following theorem,
we prove the convergence to the steady state,
in the inhibitory and weakly excitatory cases.
\begin{thm}[\bf Convergence to equilibrium in the inhibitory or weakly excitatory cases]
  \label{convthm}
Assume that $\psi'$ satisfies \eqref{psi'}. Let $n$ be the solution of \eqref{eq1}. Then $n(t)\to n^*$ in $L^1_s$ and $N(t)\to N^*$ when $t\to\infty$, where $(n^*,N^*)$ corresponds to the unique steady state of equation \eqref{eq1} given by \eqref{neq}.
\end{thm}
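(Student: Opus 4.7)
The plan is to derive $N(t)\to N^*$ from the delay form of Lemma~\ref{lm:delay} via a Lyapunov functional, and then to upgrade to $L^1_s$-convergence of $n(t,\cdot)$ through the characteristic representation of $n$. Assumption \eqref{psi'} together with the continuity of $\psi'$ on the compact interval $[0,p_\infty]$ gives $\alpha := \inf_{[0,p_\infty]}\psi' > 0$, so $\psi$ is a Lipschitz bijection with Lipschitz inverse; combined with $N\in[0,p_\infty]$ and the delay equation $\psi'(N(t))\,N'(t)=N(t-\sigma)-N(t)$ for $t>\sigma$, this immediately yields a uniform Lipschitz bound on $N$ for $t\ge\sigma$.

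The key step is to introduce the Lyapunov functional
$$W(t) \;:=\; \int_{N^*}^{N(t)} (r-N^*)\,\psi'(r)\,dr \;+\; \tfrac{1}{2}\int_{t-\sigma}^{t} \bigl(N(s)-N^*\bigr)^{2}\,ds,$$
which is non-negative because $\psi'>0$. Writing $x:=N(t)-N^*$ and $y:=N(t-\sigma)-N^*$ and using the delay equation in the form $\psi'(N(t))\,N'(t)=y-x$, a direct computation yields
$$W'(t) \;=\; x(y-x) + \tfrac{1}{2}(x^{2}-y^{2}) \;=\; -\tfrac{1}{2}(x-y)^{2} \;=\; -\tfrac{1}{2}\bigl(N(t)-N(t-\sigma)\bigr)^{2} \;\le\; 0.$$
The boundedness and Lipschitz regularity of $N$ make the family of segments $\{N_t(\theta):=N(t+\theta)\}_{t\ge\sigma}$ relatively compact in $C([-\sigma,0])$, and LaSalle's invariance principle for functional differential equations (Hale--Verduyn Lunel) places the $\omega$-limit set inside $\{W'\equiv0\}$, i.e.\ among complete solutions satisfying $N(t)\equiv N(t-\sigma)$; the delay equation then forces $N'\equiv0$ and the constraint \eqref{mass} singles out the constant $N^*$. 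Hence $N(t)\to N^*$ as $t\to\infty$.

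For the convergence of $n(t,\cdot)$ in $L^1_s$, I would plug $N(t)\to N^*$ into the explicit characteristic representation $n(t,s)=N(t-s)\exp\bigl(-\int_0^s p(s',N(t-s+s'))\,ds'\bigr)$, valid as soon as $t>s$, dominate $n(t,s)$ uniformly in $t$ by the integrable function $p_\infty\exp(-p_0(s-\sigma)_{+})$ (which is obtained from $p\ge p_0\mathds{1}_{\{s>\sigma\}}$), and conclude by dominated convergence, the contribution of the initial data vanishing as $t\to\infty$ for each fixed $s$. The main obstacle is the identification of $W$ itself: passing $\limsup$ and $\liminf$ directly into \eqref{mass} produces only $\psi(\bar N)-\psi(\underline N)\le\sigma(\bar N-\underline N)$, which is inconclusive whenever $\alpha\le\sigma$, and the quadratic delay term in $W$ is precisely what supplies the missing dissipation.
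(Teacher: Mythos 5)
Your argument is correct, but it reaches the core conclusion $N(t)\to N^*$ by a genuinely different route from the paper. You construct the Lyapunov functional $W(t)=\int_{N^*}^{N(t)}(r-N^*)\psi'(r)\,dr+\tfrac12\int_{t-\sigma}^t(N(s)-N^*)^2\,ds$, verify $W'(t)=-\tfrac12\bigl(N(t)-N(t-\sigma)\bigr)^2\le0$ along the delay equation \eqref{N'} (the computation is right), and invoke LaSalle's invariance principle for retarded functional differential equations to confine the $\omega$-limit set to constant solutions, which the conserved quantity \eqref{mass} then identifies as $N^*$. The paper instead argues elementarily: it first shows from the sign structure of \eqref{N'} that $N$ cannot be strictly monotone on any interval of length greater than $\sigma$ and that $\max_{I_k}N$ is non-increasing while $\min_{I_k}N$ is non-decreasing over the windows $I_k=[k\sigma,(k+1)\sigma]$; it then extracts a locally uniform limit $N_\infty$ of time-translates, propagates $N(t^\pm)=N(t^\pm-j\sigma)$ at the extremal points, and telescopes \eqref{mass} over $k$ windows to force $\psi(N^+)=\psi(N^-)$, hence $N^+=N^-$ by strict monotonicity of $\psi$. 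Both proofs rest on the same two pillars, namely the uniform Lipschitz bound on $N$ coming from $\psi'\ge\alpha>0$ (which supplies the compactness you need for LaSalle and the paper needs for Arzel\`a--Ascoli) and the conservation law \eqref{mass} (which pins down the limiting constant); what your route buys is a quantitative dissipation identity, $\int_\sigma^\infty(N(t)-N(t-\sigma))^2\,dt<\infty$, which could be a first step toward the convergence rate the paper leaves as a conjecture, at the price of importing the FDE version of LaSalle, whereas the paper's oscillation argument is entirely self-contained. Your handling of the $L^1_s$ convergence of $n$ (characteristics, domination by $p_\infty e^{-p_0(s-\sigma)_+}$, dominated convergence, vanishing of the initial-data tail) coincides with the paper's one-line argument. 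The only point you should make explicit is the a.e.\ differentiability of $N$, hence of $W$: since \eqref{mass} and $\psi'\ge\alpha>0$ make $\psi(N(\cdot))$, and therefore $N(\cdot)$, Lipschitz for $t>\sigma$, the functional $W$ is absolutely continuous and the dissipation identity can legitimately be integrated; the paper relies on the same tacit regularity when it bounds $N'$ from \eqref{N'}.
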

In order to achieve the result, we need the following two lemmas.
\begin{lem}
Assume that $\psi$ satisfies \eqref{psi'}. Then the activity $N(t)$ can not be strictly monotone over any interval of length larger than $\sigma$. In particular there exists a sequence of local maxima $t_n$ (and resp. minima) such that $t_n\to\infty$.
\end{lem}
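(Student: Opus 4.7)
The plan is to exploit the delay relation \eqref{N'} together with the sign condition on $\psi'$, after first establishing enough regularity on $N$. Since $\psi'$ is continuous and strictly positive on the compact interval $[0,p_\infty]$, there is $\alpha>0$ with $\psi'\geq\alpha$ there, so $\psi$ is a bi-Lipschitz increasing homeomorphism onto its image. Applying $\psi^{-1}$ to the identity \eqref{mass}, whose right-hand side is Lipschitz in $t$ with constant $p_\infty$, shows that $N$ is Lipschitz on $[\sigma,\infty)$. Differentiating \eqref{mass} almost everywhere then yields
\[
\psi'(N(t))\,N'(t) = N(t-\sigma)-N(t) \quad \text{for a.e.\ } t>\sigma,
\]
so at almost every point the sign of $N'(t)$ coincides with the sign of $N(t-\sigma)-N(t)$.

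For the first assertion I would argue by contradiction. Assume $N$ is strictly increasing on an interval $[a,b]\subset[\sigma,\infty)$ with $b-a>\sigma$. For a.e.\ $t\in(a+\sigma,b]$ one has $t-\sigma\in(a,b-\sigma]\subset[a,b]$ with $t-\sigma<t$, so strict monotonicity gives $N(t)>N(t-\sigma)$. The displayed relation and $\psi'>0$ then force $N'(t)<0$ a.e.\ on $(a+\sigma,b]$, and integration yields $N(b)<N(a+\sigma)$, contradicting strict monotonicity. The strictly decreasing case is symmetric; an interval straddling $t=\sigma$ can be reduced to this setting via \eqref{N'2} in the initial layer, but only the large-time statement is needed in what follows.

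For the sequences of local extrema, suppose for contradiction that the set of local maxima of $N$ is bounded above by some $T$. A continuous function with no local extremum on an open interval is strictly monotone there, so either $N$ also has no local minimum beyond some $T'\geq T$---and is then strictly monotone on $(T',\infty)$, an interval of length greater than $\sigma$, contradicting the first part---or $N$ admits infinitely many local minima accumulating at infinity. In the latter case, for any two local minima $t_1<t_2$ beyond $T$, the maximum of $N$ on $[t_1,t_2]$ is either attained in the interior (immediately yielding a local maximum) or at an endpoint; since the endpoint is itself a local minimum, $N$ must then be constant on a sub-interval whose interior consists of simultaneous local maxima and minima. Either way a local maximum is found beyond $T$, contradicting the choice of $T$; the sequence of local minima is produced by the symmetric argument.

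The main technical subtlety is really the first step---establishing the Lipschitz regularity of $N$ so that the pointwise sign analysis is meaningful; once that and the a.e.\ delay relation are in hand, the remainder of the argument reduces to elementary real analysis.
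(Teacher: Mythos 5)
Your proof is correct and follows essentially the same route as the paper: a contradiction obtained from the delay relation $\psi'(N(t))\,N'(t)=N(t-\sigma)-N(t)$ together with the positivity of $\psi'$. The one genuine addition is your preliminary Lipschitz-regularity step (via the bi-Lipschitz character of $\psi$ and the integral identity \eqref{mass}), which justifies the a.e.\ differentiation that the paper invokes only under an unverified smoothness hypothesis; integrating $N'<0$ over $(a+\sigma,b]$ instead of evaluating at a single point, and your explicit case analysis for the accumulation of local extrema, make rigorous what the paper states in one line.
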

\begin{proof}
  Assume the existence an interval $I\subset(0,\infty)$, with $|I|>\sigma$, such that $N(t)$ is increasing on $I$. Let $t_0>\sigma$ such that $t_0,t_0-\sigma\in I$, so we have that $N(t_0-\sigma)<N(t_0)$. However we conclude from \eqref{N'} that $N'(t_0)<0$, which contradicts that $N(t)$ is increasing.
Similarly, if we assume that $N$ decreases on $I$ we obtain a contradiction. Therefore, we conclude that $N(t)$ increases and decreases many infinitely times. 
\end{proof}
\begin{lem}
\label{max}
If  $\psi$ satisfies \eqref{psi'}, then 
$$
\max_{t\in I_{k-1}} N(t)\ge\max_{t\in I_k}N(t),
\quad\left(\textrm{resp.}\,
\min_{t\in I_{k-1}}N(t)\le\min_{t\in I_k}N(t)\right),
$$
for $k\in\N$ and $I_k:=[k\sigma,(k+1)\sigma]$.
\end{lem}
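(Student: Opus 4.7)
The plan is to derive the claim from the delay equation \eqref{N'} rewritten, thanks to assumption \eqref{psi'}, as
\[
N'(t)=\frac{N(t-\sigma)-N(t)}{\psi'(N(t))},\qquad t>\sigma.
\]
Since $\psi'(N(t))>0$, this identity yields the key sign information $\operatorname{sign}\bigl(N'(t)\bigr)=\operatorname{sign}\bigl(N(t-\sigma)-N(t)\bigr)$. Note also that $N$ is $C^1$ on $(\sigma,\infty)$: from \eqref{mass}, $\psi(N(t))=1-\int_{t-\sigma}^{t}N(s)\,ds$ is Lipschitz (hence $C^1$ after a bootstrap), and $\psi^{-1}$ is smooth on the range we consider.

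For the statement on maxima, fix $k\geq 1$ and let $t_k^*\in I_k$ achieve $M_k:=\max_{I_k}N$. I argue by contradiction, assuming $M_k>M_{k-1}$. If $t_k^*=k\sigma$, then $t_k^*\in I_{k-1}\cap I_k$, so $M_k=N(k\sigma)\leq M_{k-1}$, contradiction. Otherwise $t_k^*\in(k\sigma,(k+1)\sigma]$, and then $t_k^*-\sigma\in((k-1)\sigma,k\sigma]\subset I_{k-1}$, which gives $N(t_k^*-\sigma)\leq M_{k-1}<M_k=N(t_k^*)$. The rewritten delay equation then forces $N'(t_k^*)<0$. If $t_k^*$ is interior to $I_k$, the first-order condition requires $N'(t_k^*)=0$, contradiction. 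If $t_k^*=(k+1)\sigma$, the fact that $N$ attains its maximum at a right endpoint forces the left derivative to satisfy $N'(t_k^{*-})\geq 0$, again a contradiction. Thus $M_k\leq M_{k-1}$.

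For the statement on minima the argument is strictly symmetric: assuming $m_k:=\min_{I_k}N<m_{k-1}$ and picking a minimizer $t_k^*\in I_k$, the same three-case analysis produces $N(t_k^*-\sigma)\geq m_{k-1}>N(t_k^*)$ and hence $N'(t_k^*)>0$, in contradiction with the first-order condition at an interior minimum or with $N'(t_k^{*-})\leq 0$ at a right endpoint. I do not expect any real obstacle: the only point requiring care is the separate treatment of the two endpoints of $I_k$; the sign of $\psi'$ combined with the delay relation does all the work. The case $k=1$ uses additionally that $N$ is continuous up to $t=0$ (determined by the initial datum via \eqref{mass2}), so that $t_k^*-\sigma$ can always be evaluated in $I_{k-1}$.
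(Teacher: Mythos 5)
Your proposal is correct and follows essentially the same route as the paper: a case analysis on whether the maximizer of $N$ over $I_k$ lies at the left endpoint, the right endpoint, or in the interior, combined with the sign information from the delay relation \eqref{N'} under \eqref{psi'}. The only cosmetic difference is that you argue by contradiction while the paper argues directly (interior maximum gives $N(t_0)=N(t_0-\sigma)$, right endpoint gives $N(k\sigma)\ge N((k+1)\sigma)$), but the content is identical.
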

\begin{proof}
We consider three cases to show that $\max_{t\in I_{k-1}} N(t)\ge\max_{t\in I_k}N(t)$:
\begin{enumerate}
\item If $\displaystyle\max_{t\in I_k}N(t)=N(k\sigma)$
  then the result is straightforward.
\item If $\displaystyle\max_{t\in I_k}N(t)=N((k+1)\sigma)$
  then $N'((k+1)\sigma)\ge0$ and from formula \eqref{N'} we get that $N(k\sigma)\ge N((k+1)\sigma)$ and the result is proved.
    \item If there exists $t_0\in\left(k\sigma,(k+1)\sigma\right)$ such that $\max_{I_k}N=N(t_0)$ then $N'(t_0)=0$ and by formula \eqref{N'} we have $N(t_0)=N(t_0-\sigma)$ and the result is proved.
\end{enumerate}
Analogously it is proved that
$\min_{t\in I_{k-1}}N(t)\le\min_{t\in I_k}N(t)$.
\end{proof}

Now we can prove Theorem \ref{convthm}.

\begin{proof}
  For $T>0$ we define the sequence $N_n(t)=N(t+n)$ for $t\in[-T,T]$ and $n\in\N$ such that $n-T>\sigma$. From formula \eqref{N'} we get that $N'$
  is uniformly bounded since $\psi'(u)>0$, for $u\in [0,p_\infty]$.
  This allows to conclude that after extraction of a sub-sequence, $N_n(t)$ converges uniformly in $[-T,T]$ to some function $N_\infty(t)$ defined on $\R$. Moreover $N_\infty(t)$ satisfies for all $t\in\R$ the equation \eqref{mass} so in particular $N_\infty\in\C^1_b(\R)$, and from Lemma~\ref{max}, we deduce that for all $k\in\Z$
  $$\max_{t\in I_k} N_\infty(t)=\limsup_{t\to\infty}N(t),
  \quad\left(\textrm{resp.}\, \min_{t\in I_k}N_\infty(t)=
  \liminf_{t\to\infty}N(t)\right),$$
  and in particular $N^+\coloneqq\max_{t\in \R}N_\infty(t)
  =\max_{t\in I_k} N_\infty(t)$ (resp. $N^-\coloneqq\min_{t\in \R}
  N_\infty(t)=\min_{t\in I_k} N_\infty(t)$).

  Next, we prove that $N_\infty(t)$ is a constant function.
  Choose $t^-, t^+\in\R$, with 
  $t^-<t^+$, such that $N(t^+)=N^+,\,N(t^-)=N^-$ and $|t^+-t^-|\le\sigma$. Since $N'(t^{\pm})=0$, from equation \eqref{N'} we get $N(t^{\pm})=N(t^{\pm}-\sigma)$ and thus by iteration we get
$$N(t^{\pm})=N(t^{\pm}-j\sigma),\quad\forall j\in\N.$$
Consider now $k\in\N$. By evaluating formula \eqref{mass} for $t=t^{\pm}-j\sigma$ for $j=1,\dots,k$ and taking the sum over $j$, the following equality holds
$$\int_{t^+-k\sigma}^{t^+} N_\infty(s)+k\psi(N^+)\,ds=\int_{t^--k\sigma}^{t^-} N_\infty(s)\,ds+k\psi(N^-),$$
and we conclude
$$\psi(N^+)-\psi(N^-)=\frac{1}{k}\left(\int_{t^-}^{t^+} N_\infty(s)\,ds-\int_{t^--k\sigma}^{t^+-k\sigma} N_\infty(s)\,ds\right).$$
Hence by taking $k\to\infty$ we conclude $\psi(N^+)=\psi(N^-)$ and this implies $N^+=N^-$, since $\psi$ is strictly increasing. Therefore $N_\infty$ is a constant function.

Since $N_\infty$ satisfies formula \eqref{mass}, we conclude by uniqueness of the solution of equation \eqref{neq} that $N_\infty=N^*$. In particular we conclude that $N(t)\to N^*$ when $t\to\infty$ and the convergence of $n(t)$ is obtained via the method of characteristics and Lebesgue's theorem.
\end{proof}

In strongly excitatory networks, $\psi'$ changes sign, therefore
 the above theorem does not apply. And also there is no
steady state uniqueness. In this case,
we prove the following local convergence theorem.
\begin{thm}[\bf Monotone convergence in the strongly excitatory case]
\label{monotoneconv}
Let $N^*>0$ be a solution of \eqref{neq} (i.e.  a steady-state
activity of \eqref{eq1}), and consider $N\in\C[0,\sigma]$ satisfying the following
conditions:
\begin{enumerate}
    \item $N(t)<N(\sigma)<N^*$ for all $t\in(0,\sigma)$ (resp. $N(t)>N(\sigma)>N^*$).
    \item $\int_0^\sigma N(s)\,ds+\psi(N(\sigma))=1$.
    \item $\psi'(u)<0$ \textrm{for} $u\in[N_-,N^*]$ with $N_-=\displaystyle\min_{t\in[0,\sigma]}N(t)$ (resp. for $u\in[N^*,N_+]$ with $N_+=\displaystyle\max_{t\in[0,\sigma]}N(t)$).
\end{enumerate}
Then there exists a strictly increasing (resp. decreasing) solution $N(t)$ of \eqref{mass}, which extends the given
$N\in\C[0,\sigma]$. Moreover if $N^*$ is the unique steady state activity lying on $[N(\sigma),N^*]$
(resp. on  $[N^*,N(\sigma)]$), then $N(t)\to N^*$ when $t\to\infty$.
\end{thm}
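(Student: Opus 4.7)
The plan is to interpret the integral equation \eqref{mass} as a delay equation that extends $N$ step by step beyond $[0,\sigma]$, and then to bootstrap strict monotonicity together with the bound $N(t)<N^*$. I treat the increasing case; the decreasing case is analogous. By hypothesis 3 and continuity of $\psi'$, there is a constant $c>0$ such that $\psi'(u)\le -c$ on $[N_-,N^*]$, so $\psi$ restricted to this interval is a strictly decreasing homeomorphism. Hypothesis 2 provides the compatibility $\psi(N(\sigma))=1-\int_0^\sigma N(s)\,ds$ at $t=\sigma$, and for $t$ slightly larger than $\sigma$ the identity
\begin{equation*}
\psi(N(t))=1-\int_{t-\sigma}^t N(s)\,ds
\end{equation*}
inverts locally to define $N(t)$ uniquely; differentiation recovers the delay ODE $N'(t)=(N(t-\sigma)-N(t))/\psi'(N(t))$ already written in \eqref{N'}, which can be iterated on intervals $[k\sigma,(k+1)\sigma]$.

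Next I introduce the maximal interval
\begin{equation*}
T^*:=\sup\{T\ge\sigma:\ N \text{ extends to } [0,T],\ N(t)<N^* \text{ on } [0,T],\ N \text{ strictly increasing on } (\sigma,T]\},
\end{equation*}
and note that $T^*>\sigma$ because for $t$ slightly larger than $\sigma$ hypothesis 1 gives $N(t-\sigma)<N(\sigma)$, and $N(t)$ is close to $N(\sigma)$, so $N'(t)>0$. I claim $T^*=\infty$. For any $t\in(\sigma,T^*)$, either $t-\sigma\in(0,\sigma)$ and hypothesis 1 yields $N(t-\sigma)<N(\sigma)<N(t)$, or $t-\sigma\ge\sigma$ and the strict monotonicity already established on $[\sigma,T^*)$ gives $N(t-\sigma)<N(t)$; combined with $\psi'(N(t))<0$ this yields $N'(t)>0$, so strict monotonicity propagates. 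Since the delay ODE cannot blow up while $N$ remains in $[N_-,N^*]$ (as $|\psi'|\ge c$ there), if $T^*<\infty$ continuity forces $N(T^*)=N^*$. Evaluating \eqref{mass} at $t=T^*$ then gives $\int_{T^*-\sigma}^{T^*} N(s)\,ds=1-\psi(N^*)=\sigma N^*$, contradicting $N(s)<N^*$ on $[T^*-\sigma,T^*)$. Hence $T^*=\infty$.

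Finally, since $N$ is strictly increasing on $[\sigma,\infty)$ and bounded above by $N^*$, the limit $L:=\lim_{t\to\infty}N(t)\in[N(\sigma),N^*]$ exists. Passing to the limit $t\to\infty$ in \eqref{mass} and using continuity of $\psi$, the value $L$ satisfies $\sigma L+\psi(L)=1$, so it is a steady-state activity lying in $[N(\sigma),N^*]$, and the uniqueness hypothesis forces $L=N^*$. The main difficulty is the coupled bootstrap in the second step: strict monotonicity is needed to control the sign of $N(t-\sigma)-N(t)$, while the bound $N<N^*$ is needed to keep $\psi'$ strictly negative, and both must be propagated simultaneously. The contradiction at $T^*$ closes both properties in one shot, after which the limit argument is immediate.
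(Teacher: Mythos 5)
Your proof is correct and follows essentially the same route as the paper: the same delay equation \eqref{N'}, the same contradiction (evaluating \eqref{mass} at a hypothetical first time $N$ reaches $N^*$ against $\sigma N^*+\psi(N^*)=1$) to keep $N<N^*$, the same first-local-maximum argument to propagate strict monotonicity, and monotone bounded convergence to the unique steady state in $[N(\sigma),N^*]$ at the end. The only real difference is technical: the paper secures global existence up front by replacing $\psi$ with a globally Lipschitz $\widetilde{\psi}$ satisfying $\widetilde{\psi}'<-\alpha$ and coinciding with $\psi$ on $[N_-,N^*]$, then shows a posteriori that the solution never leaves that interval, whereas you run a continuation argument on a maximal interval $[0,T^*)$, which also works but obliges you to justify (as you do, somewhat tersely) that a finite $T^*$ would force $N(T^*)=N^*$ before the integral identity delivers the contradiction.
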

We point out that to be a solution of the time elapsed model \eqref{eq1},
we need to find a compatible initial data $n_0(s)$. 
Theorem \ref{th:reconstruction} can help in that direction.

\begin{proof}
We start with the proof of the case $N(t)$ increasing.
  Consider $\widetilde{\psi}\colon\R\to\R$ a smooth Lipschitz function such that for some constant $\alpha>0$ we have $\widetilde{\psi}'(u)<-\alpha$ and 
$$\widetilde{\psi}\equiv\psi\quad\textrm{on}\quad[N_-,N^*].$$
Thus the following delay differential equation
\begin{equation}
\label{delaypsitilde}
    \left\{\begin{matrix*}
    \frac{d}{dt}\widetilde{\psi}(u(t))=u(t-\sigma)-u(t)&t>\sigma,\\
    u(t)\equiv N(t)& t\in[0,\sigma],
    \end{matrix*}
    \right.
\end{equation}
has a unique global $u\in\C^1[\sigma^+,\infty)\cap\C[0,\infty)$ by applying Cauchy-Lipschitz theorem. We call this global solution as $N(t)$ as well. From equation \eqref{delaypsitilde} we observe that
$$\int_{t-\sigma}^{t} N(s)\,ds+\widetilde{\psi}(N(t))$$
is constant and from condition 2 we have that
\begin{equation}
\label{intpsitilde}
    \int_{t-\sigma}^{t} N(s)\,ds+\tilde{\psi}(N(t))=1.
\end{equation}
Next we prove that $N(t)<N^*$ for all $t>\sigma$. Suppose that there exists $t^*>\sigma$ such that $N(t^*)=N^*$ and $N(t)<N^*$ for $t<t^*$. Thus from equation \eqref{intpsitilde} we observe that
$$1=\int_{t^*-\sigma}^{t^*} N(s)\,ds+\psi(N^*)<\sigma N^*+\psi(N^*),$$
which contradicts equation \eqref{neq} and hence $N(t)<N^*$ for all $t>\sigma$.

Now we prove that $N'(t)>0$ for all $t>\sigma$. From equation \eqref{delaypsitilde} we have that $N'(\sigma^+)>0$ and from continuity there exists $\varepsilon>0$ such that $N'(t)>0$ for $t\in(\sigma,\sigma+\varepsilon)$. Suppose that there exists a first local maximum $t_*>\sigma$, so that $N'(t_*)=0$ and from equation \eqref{delaypsitilde} we get $N(t_*-\sigma)=N(t_*)>N(\sigma)$. If $t_*<2\sigma$ then we contradict condition 1 and if $t_*\ge2\sigma$ we contradict the monotony of $N(t)$ on $[\sigma,t_*]$. Therefore $N'(t)>0$ for all $t>\sigma$.

From monotony and boundedness of $N$, we conclude in particular that $N$ is a solution of equation \eqref{mass} for $t>\sigma$. Moreover, if $N^*$ is the unique steady state on the interval $[N(\sigma),N^*]$, then it is straightforward that $N(t)\to N^*$ when $t\to\infty$. The proof of $N(t)$ decreasing is analogous.
\end{proof}

Figure \ref{fig-monotoneconv} shows an  example of this theorem.
\begin{figure}[ht!]
    \centering
    \includegraphics[scale=0.5]{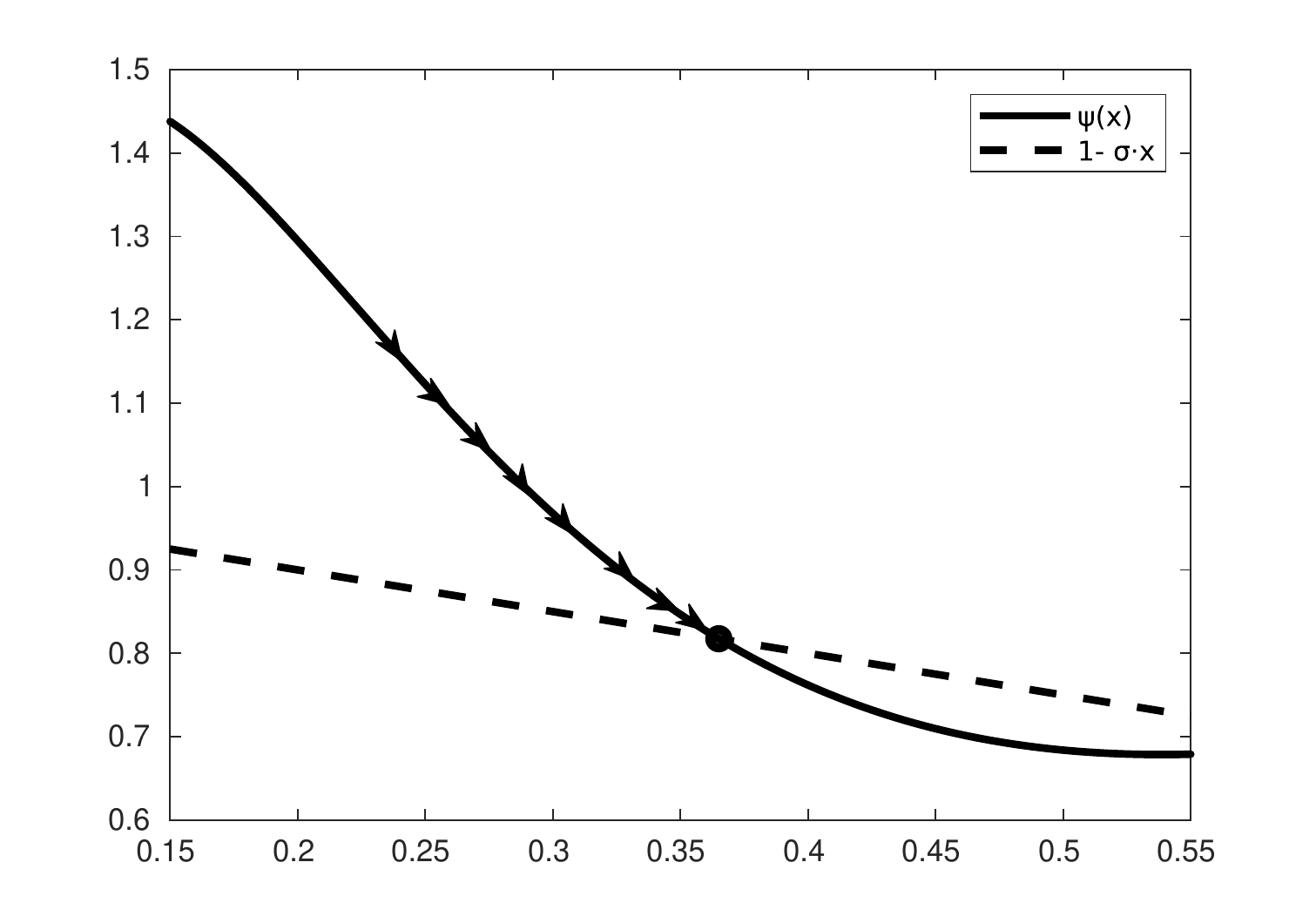}
      \\[-5pt]
    \caption{An example of an increasing solution converging to equilibrium. The solution remains in the region where $\psi$ is strictly decreasing.}
    \label{fig-monotoneconv}
\end{figure}

\section{Periodic solutions for strongly excitatory networks}
\label{periodic}

From equation \eqref{mass} we can also construct various periodic solutions for the activity $N(t)$, which generate periodic solutions of the System \eqref{eq1} by means of Theorem~\ref{periodicn}. However, we observe from the integral equation~\eqref{mass} that a $\sigma$-periodic solution satisfies that $\int_{t-\sigma}^t N(s)ds $ is constant and thus $\psi(N(t))$ is also constant. Hence, except when $\psi$ is locally constant,  the only continuous $\sigma$-periodic solutions are constant (steady states). When $\psi'$ changes sign, we build  several  types of solutions including piece-wise constant discontinuous $\sigma$-periodic solutions and piece-wise smooth discontinous $2\sigma$-periodic solutions.

\subsection{Piece-wise constant periodic solutions}
Our first goal is to  build  $\sigma$ periodic solutions with jump discontinuities which keep constant  the value of $\psi(N(t))$.
As a consequence of Theorem~\ref{periodicn}, this is possible when $\psi'$ changes sign and some structure condition is met on $\psi$.

\begin{thm}[\bf Existence of piece-wise constant $\sigma$-periodic activities]
\label{piecewiseperiodic}
Assume that $\psi'$ changes sign. Let
$N_1\neq N_2$ be numbers in $(0,p_\infty]$ such that $\psi(N_1)=\psi(N_2)$. Consider the function defined by
\begin{equation}
\label{Nperiod}
    N(t)=\left\{
    \begin{matrix}
    N_1&\textrm{when}\quad t\in[0,\alpha)\vspace{0.15cm},\\
    N_2&\textrm{when}\quad t\in[\alpha,\sigma),
    \end{matrix}
    \right.
\end{equation}
and assume there is an $\alpha\in(0,\sigma)$ such that
\begin{equation}
\label{algebraicsigma}
    \alpha N_1+(\sigma-\alpha)N_2+\psi(N_1)=1.
\end{equation}
Then the periodic extension of $N(t)$ determines a $\sigma$-periodic solution of System \eqref{eq1}.
\end{thm}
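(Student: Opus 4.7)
The plan is to verify that the $\sigma$-periodic extension of the piece-wise constant function $N(t)$ defined in \eqref{Nperiod} satisfies the integral equation \eqref{mass} at every $t\in\mathbb{R}$, and then invoke the reconstruction result of Theorem~\ref{periodicn} to produce a corresponding $\sigma$-periodic solution of the nonlinear System~\eqref{eq1}. The hypothesis that $\psi'$ changes sign is used only to guarantee that two distinct values $N_1\neq N_2$ with $\psi(N_1)=\psi(N_2)$ exist in the first place.

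First I would exploit the $\sigma$-periodicity of $N$: since $N\geq 0$ is $\sigma$-periodic, the sliding integral $\int_{t-\sigma}^t N(s)\,ds$ is independent of $t$ and equals the integral over one full period, namely
\begin{equation*}
\int_{t-\sigma}^t N(s)\,ds \;=\; \int_0^\sigma N(s)\,ds \;=\; \alpha N_1 + (\sigma-\alpha)N_2.
\end{equation*}
Next, because $N(t)$ takes only the two values $N_1$ and $N_2$ and $\psi(N_1)=\psi(N_2)$ by assumption, the map $t\mapsto \psi(N(t))$ is identically equal to the constant $\psi(N_1)$, despite the jump of $N$ at $t=\alpha$ (mod $\sigma$). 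Plugging these two observations into \eqref{mass} reduces the integral equation at every $t$ to the single algebraic condition
\begin{equation*}
\alpha N_1 + (\sigma-\alpha)N_2 + \psi(N_1) \;=\; 1,
\end{equation*}
which is exactly the hypothesis \eqref{algebraicsigma}.

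Once \eqref{mass} is verified for all $t\in\mathbb{R}$, $N$ is a non-negative $\sigma$-periodic solution of the integral equation, and Theorem~\ref{periodicn} directly yields a $\sigma$-periodic $n(t,s)$ that solves the linear problem \eqref{nlinear} with activity $N$ and is simultaneously a solution of the nonlinear system \eqref{eq1}. No step is a serious obstacle; the real content of the theorem is the compatibility condition $\psi(N_1)=\psi(N_2)$, which is exactly what allows $N$ to jump between two levels while keeping $\psi(N(t))$ constant. This is also why such piece-wise constant $\sigma$-periodic activities can only appear when $\psi'$ changes sign, i.e.\ in the strongly excitatory regime.
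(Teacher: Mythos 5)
Your proposal is correct and matches the paper's (largely implicit) argument: the paper presents this theorem as a direct consequence of Theorem~\ref{periodicn}, relying on exactly the two observations you make — that $\int_{t-\sigma}^{t}N(s)\,ds$ is the constant $\alpha N_1+(\sigma-\alpha)N_2$ by $\sigma$-periodicity and that $\psi(N(t))\equiv\psi(N_1)$ by the compatibility condition — so that \eqref{mass} reduces to \eqref{algebraicsigma}. Nothing further is needed.
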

\begin{rmk}
  Notice that, if  such an $\alpha$ exists, there is a steady state
  between $N_1$ and $N_2$, because if $N_1<N_2$ then
  $\sigma N_1+\psi(N_1)<1<\sigma N_2+\psi(N_2)$. Therefore there
  exists $N^*$ such that   $\sigma N^*+\psi(N^*)=1$. If $N_2 <N_1$
  we proceed in the same way exchanging $N_1$ and $N_2$
\end{rmk}
\begin{rmk}
The same construction can be done for a piece-wise constant $N(t)$ with more than one jump, as long as it verifies the equation \eqref{mass} and $\psi$ remains constant for the values taken by $N(t)$.
\end{rmk}

As an example where condition \eqref{algebraicsigma} is verified, consider $\sigma=1$ and $\varphi(N)=\left(1+e^{-9N+3.5}\right)^{-1}$ so that
$$\psi(N)=N(1+e^{-9N+3.5}).$$
\begin{figure}[ht!]
    \centering
    \includegraphics[scale=0.6]{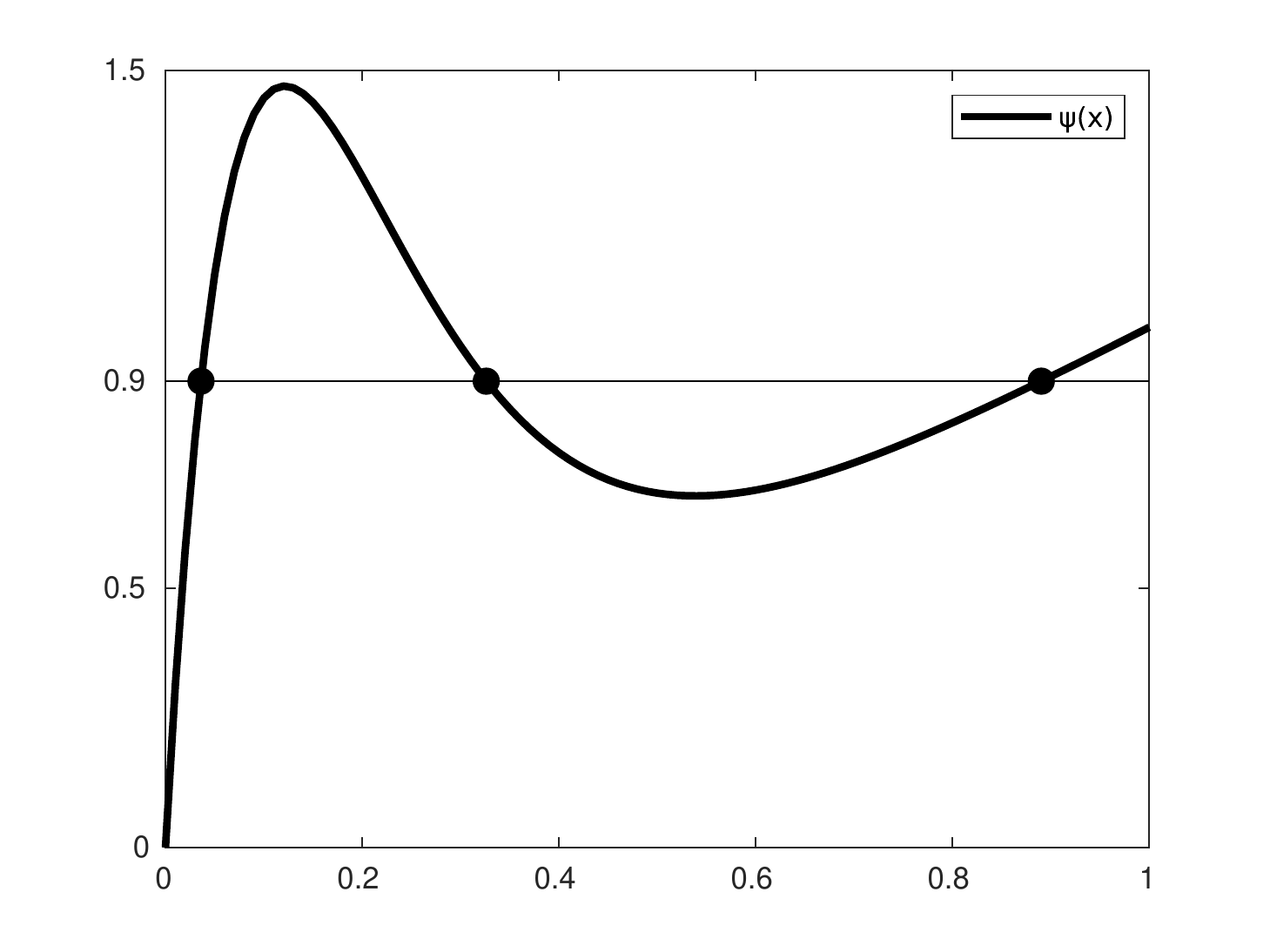}
      \\[-5pt]
    \caption{Graphic of $\psi(u)=u(1+e^{-9u+3.5})$.}
    \label{psi2}
\end{figure}
As we see in Figure \ref{psi2}, $\psi'$ changes sign and there are three solutions of the equation $\psi(u)=0.9$. Take $N_1$ as the minimal solution and $N_2$ as the maximal one, so that there exists
$\alpha:=\frac{N_2-0.1}{N_2-N_1}\in(0,1)$ such that
$$\alpha N_1+(1-\alpha)N_2=0.1,$$
since $N_1<0.1<N_2$. Therefore condition \eqref{algebraicsigma} holds and $N(t)$ defined in \eqref{Nperiod} determines a periodic solution of \eqref{eq1} by means of Theorem~\ref{periodicn}.

We can also get continuous periodic solutions for a specific type of firing rates as we state it in the following proposition.
\begin{prop}
Let $\varphi$ be a smooth function such that $\varphi(u)=Cu$ on some interval $[a,b]\subseteq [0,p_\infty]$ with $C>1$. Assume moreover that $\sigma>0$ satisfies the inequality
\begin{equation}
\label{psiconst}
    a\sigma < 1-\frac{1}{C} < b\sigma.
\end{equation}
Let  $u$ be a bounded  $\sigma$ periodic function such that   $\int_0^\sigma u(s)\,ds=0.$
Then there exists $\alpha >0$ such that 
$$N(t)\coloneqq\frac{1}{\sigma}\left(1-\frac{1}{C}\right)+\alpha u(t)$$
is a solution of \eqref{mass}.
\label{prop:per1}
\end{prop}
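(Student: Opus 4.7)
The plan is to check directly that the Ansatz $N(t) = \tfrac{1}{\sigma}(1 - 1/C) + \alpha\, u(t)$ satisfies the integral equation \eqref{mass} by exploiting two facts: $u$ has zero mean on every period, and $\psi$ is \emph{constant} on the flat region $[a,b]$. Condition \eqref{psiconst} ensures that the baseline $\tfrac{1}{\sigma}(1 - 1/C)$ lies strictly between $a$ and $b$, leaving room for a small perturbation.

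First I would compute the integral term. Since $u$ is $\sigma$-periodic with zero average, $\int_{t-\sigma}^t u(s)\,ds = \int_0^\sigma u(s)\,ds = 0$ for every $t$, and hence
$$\int_{t-\sigma}^t N(s)\,ds = 1 - \frac{1}{C}.$$

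Next I would use \eqref{psiconst} to pick $\alpha$. Writing $M := \|u\|_{L^\infty}$, condition \eqref{psiconst} gives $a < \tfrac{1}{\sigma}(1-1/C) < b$, so any
$$0 < \alpha \le \frac{1}{M}\min\!\Bigl(\tfrac{1}{\sigma}(1-1/C) - a,\; b - \tfrac{1}{\sigma}(1-1/C)\Bigr)$$
forces $N(t) \in [a,b]$ for every $t$ (and if $M = 0$ the proposition reduces to the steady state and any $\alpha > 0$ works). On this range $\varphi(N(t)) = C\, N(t)$, so by the very definition \eqref{psi} of $\psi$ we obtain $\psi(N(t)) = 1/C$ identically in $t$.

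Adding the two contributions gives
$$\int_{t-\sigma}^t N(s)\,ds + \psi(N(t)) = \bigl(1 - 1/C\bigr) + 1/C = 1,$$
which is exactly \eqref{mass}; Theorem \ref{periodicn} then promotes this periodic activity to a periodic solution of \eqref{eq1}. There is essentially no obstacle: the content of the statement is the observation that on a flat piece of $\psi$ one gains a free functional parameter, so one can superimpose an arbitrary zero-mean $\sigma$-periodic oscillation on top of the constant activity without leaving the flat region, provided the amplitude $\alpha$ is chosen small enough relative to the gap between the baseline and the endpoints $a,b$.
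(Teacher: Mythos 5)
Your proof is correct and follows the same route as the paper's: choose $\alpha$ small enough that $N(\R)\subset[a,b]$ so that $\psi(N(\cdot))\equiv 1/C$, and use the zero mean of $u$ to get $\int_{t-\sigma}^t N(s)\,ds = 1-1/C$. You simply spell out the explicit bound on $\alpha$ and the final verification, which the paper leaves implicit.
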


\begin{proof} Let  $u$ be a bounded  $\sigma$ periodic function such that   $\int_0^\sigma u(s)\,ds=0.$
For $\alpha>0$ small enough,  $N(t)$ defined in Proposition \ref{prop:per1} is such that $N(\R) \subset [a,b]$, and so, $N$ satisfies \eqref{mass} because $\psi(N(\cdot)) =\frac 1 C$.
\end{proof}

\subsection{Piece-wise monotone $2\sigma$-periodic solutions}
With more elaborate arguments inspired in the works of Hadeler et al. in \cite{hadeler1977periodic} on periodic solutions of di\-ffe\-ren\-tial delay equations, we can also build $2\sigma$-periodic solutions of System \eqref{eq1}, which are piece-wise monotone. In order to achieve the result, we study the delay differential equation given by
\begin{equation}
\label{Nretard}
    \frac{d}{dt}\psi(N(t))=N(t-\sigma)-N(t).
\end{equation}

\begin{thm}[\bf Existence of piece-wise monotone $2\sigma$-periodic solutions]
\label{2sigmaperiod}
Assume that $\psi$ is smooth with $\psi'$ changing sign. Consider $\underline{N}>0$ such that it is a local minimum of $\psi$ and there exists $\varepsilon>0$ such that $\psi$ is strictly convex on $(\underline{N}-\varepsilon,\underline{N}+\varepsilon)$. Then for $\sigma>0$ small enough there exists a $2\sigma$ periodic solution $N(t)$ of \eqref{Nretard} with $\psi(N)\in W^{1,\infty}(\R)$, such that $N(t)$ is strictly decreasing on $(0,2\sigma)$ with a discontinuity at $\sigma$.

If in addition $\psi(\underline{N})<1$ and $\psi(\underline{N}\pm\varepsilon)>1$, then there exists a solution $n$ of equation \eqref{eq1} such that the activity $N(t)$ solves \eqref{mass}.
\end{thm}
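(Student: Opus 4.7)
The plan is to exploit the $2\sigma$-periodicity via the ansatz $y(t)=N(t)$ and $z(t)=N(t+\sigma)$ for $t\in(0,\sigma)$, so that $N(t-\sigma)=z(t)$ on this half-period and \eqref{Nretard} becomes the planar system
\begin{equation*}
\tfrac{d}{dt}\psi(y)=z-y,\qquad \tfrac{d}{dt}\psi(z)=y-z,
\end{equation*}
whose sum is the conservation law $\psi(y(t))+\psi(z(t))\equiv c$. I would localize to the convex neighborhood by looking for orbits with $y\in(\underline{N},\underline{N}+\varepsilon)$ and $z\in(\underline{N}-\varepsilon,\underline{N})$; denoting $g$ the inverse of $\psi$ on the right branch (increasing) and $h$ the inverse on the left branch (decreasing), and setting $u=\psi(y)$, the system collapses to the scalar autonomous ODE $u'=h(c-u)-g(u)<0$.

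For $\psi(N)\in W^{1,\infty}(\R)$ the traces of $\psi(N)$ must agree across the jumps at $t=0$ and $t=\sigma$, which on the level set $\psi(y)+\psi(z)=c$ reduces to the single symmetry condition $u(0)+u(\sigma)=c$. Fixing $c$ slightly above $2\psi(\underline{N})$ and parametrizing by $r$ the initial value $u(0)=c/2+r$, the required terminal value is $u(\sigma)=c/2-r$; by strict convexity of $\psi$ at $\underline{N}$ the branches $g,h$ are smooth and $h(c-u)-g(u)$ is bounded and nonzero on the arc considered, so the travel time $T(r,c)=\int_{c/2-r}^{c/2+r}du/|h(c-u)-g(u)|$ is continuous and strictly increasing in $r$ with $T(0,c)=0$. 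The intermediate value theorem then yields, for every $\sigma$ sufficiently small and every admissible $c$, a unique $r(c,\sigma)$ with $T=\sigma$. Since $u$ decreases, $g(u)$ decreases, and since $c-u$ increases and $h$ is decreasing, $z=h(c-u)$ decreases as well; hence $N$ is strictly decreasing on $(0,\sigma)$ and $(\sigma,2\sigma)$, with a downward jump at $\sigma$ from $y(\sigma)>\underline{N}$ to $z(0)<\underline{N}$, while the matching condition enforces continuity of $\psi(N)$ and the ODE gives it bounded derivative, proving $\psi(N)\in W^{1,\infty}(\R)$.

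For the second statement, a solution of \eqref{Nretard} only satisfies $\int_{t-\sigma}^{t}N(s)\,ds+\psi(N(t))\equiv K$ for some constant $K$, and we need to select $K=1$ to obtain \eqref{mass}. Integrating this identity over one period and using $\psi(y)+\psi(z)=c$ together with Fubini's theorem gives $K=c/2+\sigma\bar N$, where $\bar N=(2\sigma)^{-1}\!\int_0^{2\sigma}\!N$. Viewing $c$ as the free parameter of the family constructed above, as $c\to 2\psi(\underline{N})^+$ the solution collapses to $\underline{N}$, so $\bar N\to\underline{N}$ and $K\to\psi(\underline{N})+\sigma\underline{N}<1$ for $\sigma$ small thanks to $\psi(\underline{N})<1$; as $c$ grows toward $\psi(\underline{N}-\varepsilon)+\psi(\underline{N}+\varepsilon)>2$ (allowed since $\psi(\underline{N}\pm\varepsilon)>1$), $c/2$ alone exceeds $1$ so $K>1$. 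Continuity of $c\mapsto K(c)$ and a final IVT step select $c^*$ with $K(c^*)=1$, and Theorem~\ref{periodicn} reconstructs the density $n$ from this activity.

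The main obstacle is the simultaneous enforcement of the time-of-flight $T(r,c)=\sigma$ and the symmetry condition $u(0)+u(\sigma)=c$ encoding $\psi$-continuity across the jumps: it is the strict convexity of $\psi$ at $\underline{N}$ that makes the two local inverses $g,h$ well defined and keeps the reduced speed under control, enabling the first IVT. The second IVT, tuning $c$ to achieve $K=1$, is precisely what forces the sign hypotheses $\psi(\underline{N})<1<\psi(\underline{N}\pm\varepsilon)$ in the statement.
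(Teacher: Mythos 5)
Your construction is correct in its essentials but follows a genuinely different route from the paper. The paper works directly in the function space $\C[0,\sigma]$: it defines an operator $T$ by solving the delay equation backward on $(\sigma,2\sigma)$ and forward on $(2\sigma,3\sigma)$ with prescribed endpoint values $\bar N^-,\bar N^+$ satisfying $\psi(\bar N^-)=\psi(\bar N^+)$, proves $T$ is a contraction for $\sigma$ small (the Lipschitz constants $\sigma A_\sigma$, $\sigma B_\sigma$ involve the local inverses of $\psi$), and only afterwards verifies the jump condition at $\sigma$ via the piecewise conservation of $Q(t)=\int_{t-\sigma}^t N+\psi(N(t))$; the mass is then tuned to $1$ by an intermediate value argument in the parameter $\bar N^+$. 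You instead exploit the $2\sigma$-periodic ansatz from the outset to collapse the delay equation into the planar autonomous system for $(y,z)=(N(t),N(t+\sigma))$, whose first integral $\psi(y)+\psi(z)\equiv c$ reduces everything to a scalar ODE $u'=h(c-u)-g(u)$ on the level set; existence then comes from an explicit time-of-flight computation and an IVT in the shooting parameter $r$, and — elegantly — both jump conditions at $0$ and at $\sigma$ reduce to the single symmetry relation $u(0)+u(\sigma)=c$. Your route buys an explicit description of the orbit (it lives on a level curve of $\psi(y)+\psi(z)$), avoids the contraction estimates entirely, and makes the identity $K=c/2+\sigma\bar N$ transparent; the paper's fixed-point route is less explicit but does not require guessing the branch structure in advance. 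Your final IVT in $c$ plays exactly the role of the paper's IVT in $\bar N^+$ (the two parametrizations are equivalent via $\bar N^+=g(u(0))$), and both use Theorem~\ref{periodicn} to reconstruct $n$.

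One point to tighten: for \emph{fixed} $\sigma>0$ the limit $c\to 2\psi(\underline{N})^+$ is not accessible, because the maximal time of flight $T(r_{\max}(c),c)$ tends to $0$ as $c\downarrow 2\psi(\underline{N})$ (by strict convexity the speed is of order $\sqrt{c-2\psi(\underline{N})}$ while the arc length is of order $c-2\psi(\underline{N})$), so the admissible range of $c$ is $[c_{\min}(\sigma),c_{\max}(\sigma)]$ with $c_{\min}(\sigma)>2\psi(\underline{N})$. The fix is immediate: evaluate $K$ at $c=c_{\min}(\sigma)$, where the orbit still lies in the fixed convex neighbourhood, so $\bar N$ is bounded and $K(c_{\min}(\sigma))=c_{\min}(\sigma)/2+\sigma\bar N\to\psi(\underline{N})<1$ as $\sigma\to0$; similarly one must check that some $c$ with $c/2>1$ remains admissible for $\sigma$ small, which holds since $\psi(\underline{N}\pm\varepsilon)>1$ keeps such level sets a positive distance from the degenerate one. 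With that adjustment the two IVT endpoints are justified and the argument closes.
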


\begin{proof}
Since $\psi$ is strictly convex around the local minimum $\underline{N}$, consider $\bar{N}^-,\,\bar{N}^+$ be two positive constants such that $$\psi(\bar{N}^-)=\psi(\bar{N}^+),\quad \bar{N}^-<\underline{N}<\bar{N}^+$$ 
with $\psi$ strictly decreasing on $[\bar{N}^-,\underline{N}]$ and strictly increasing on $[\underline{N},\bar{N}^+]$. We construct a periodic solution $N(t)$ of \eqref{Nretard} satisfying the following conditions:

\begin{enumerate}
    \item \label{1stb}$\left\{\begin{matrix}\underline{N}< N(t)< \bar{N}^+,\:N'(t)<0 &\textrm{for}\:t\in(0,\sigma),\\
    \bar{N}^-< N(t)<\underline{N},\:N'(t)<0&\textrm{for}\:t\in(\sigma,2\sigma).
    \end{matrix}\right.$
    \item \label{2ndb}$N(0^+)=\bar{N}^+,\,N(2\sigma^-)=\bar{N}^-$.
    \item \label{3rdb}$\psi(N(\sigma^-))=\psi(N(\sigma^+)).$
\end{enumerate}

The first step is to build a periodic solution solving equation \eqref{Nretard} on $(0,\sigma)$ and $(\sigma,2\sigma)$, which satisfies conditions \ref{1stb} and \ref{2ndb}. Let $C$ be the closed subset of the Banach space $\C[0,\sigma]$ defined by
\begin{equation*}
    C\coloneqq\left\{N(t)\in\C[0,\sigma]\colon N(0)=\bar{N}^+,\,N(t)\,\textrm{non-increasing},\,\psi(N(t))\ge\psi(\bar{N}^+)-(\bar{N}^+-\bar{N}^-)\sigma\:\textrm{for}\:t\in[0,\sigma]\right\}.
\end{equation*}
Observe that for $\sigma$ small enough we assure that $\psi(N(t))>\psi(\underline{N})$ for all $t\in[0,\sigma]$. Our strategy is to build a solution on $[0,2\sigma]$ such that its restriction to $[0,\sigma]$ is a fixed point of an operator in $C$. 

Now for $N\in C$ we define $M(t)$ as the solution of the backward problem
\begin{equation}
    \left\{\begin{matrix}
    \frac{d}{dt}\psi(M(t))=N(t-\sigma)-M(t),&t\in (\sigma,2\sigma).\\
    M(2\sigma)=\bar{N}^-.&
    \end{matrix}\right.
\end{equation}
For $\sigma$ small enough, this equation is well-posed in the classical sense  on $[\sigma,2\sigma)$. Moreover we have $\psi'(M(t))<0,\,M'(t)<0$ and $\tfrac{d}{dt}\psi(M(t))$ verifies
$$\frac{d}{dt}\psi(M(t))\le \bar{N}^+-\bar{N}^-,\quad\forall t\in(\sigma,2\sigma).$$
  By integrating this inequality, between $t$ and $2\sigma$,
  we get $\psi(M(t))\ge\psi(\bar{N}^+)-(\bar{N}^+-\bar{N}^-)\sigma>\psi(\underline{N})$, and since $M(2\sigma)=\bar{N}^-$ we have $\bar{N}^-< M(t)<\underline{N}$ for $t\in(\sigma,2\sigma)$. Similarly, we define $L(t)$ as the solution of problem
\begin{equation}
    \left\{\begin{matrix}
    \frac{d}{dt}\psi(L(t))=M(t-\sigma)-L(t),&t\in (2\sigma,3\sigma).\\
    L(2\sigma)=\bar{N}^+,&
    \end{matrix}\right.
\end{equation}
so that for $\sigma$ small enough it is well-posed in the classical sense on $[2\sigma,3\sigma)$. Moreover we have $\psi'(L(t))>0,\,L'(t)<0$, and $\psi(L(t))\ge\psi(\bar{N}^+)-(\bar{N}^+-\bar{N}^-)\sigma>\psi(\underline{N})$, which implies that $\underline{N}< L(t)< \bar{N}^+$ for $t\in (2\sigma,3\sigma)$, since $L(2\sigma)=\bar{N}^+$.

Therefore we define the continuous map $T\colon C\to C$ given by $T[N](t)=L(t-2\sigma)$ and we look for a fixed point of the operator $T$ in order to find a $2\sigma$-periodic function $N(t)$ with $\psi(N(0^+))=\psi(N(2\sigma^-))$, satisfying equation \eqref{Nretard} on $(0,\sigma)$ and $(\sigma,2\sigma)$ with jump discontinuities such that $\psi(N(t))$ is continuous.

Now we proceed to prove that $T$ is a contraction for $\sigma$ small enough. Consider $N_1,N_2\in C$ with their respective $M_1,M_2$ and $L_1,L_2$. For the difference between $M_1$ and $M_2$ we have for $t\in[\sigma,2\sigma]$
\begin{equation*}
\begin{split}
    |M_2(t)-M_1(t)|&\le A_\sigma|\psi(M_2(t))-\psi(M_1(t))|\\
                   &\le A_\sigma\left|\int_t^{2\sigma}\frac{d}{dt}\left(\psi(M_1(t'))-\psi(M_2(t'))\right)\,dt'\right|\\
                   &\le\sigma A_\sigma\|N_2-N_1\|_\infty+\sigma A_\sigma\|M_1-M_2\|_\infty,
\end{split}
\end{equation*}
with $A_\sigma\coloneqq\max\left\{\left|\left(\psi^{-1}\right)'(u)\right|\colon \psi(\bar{N}^+)-(\bar{N}^+-\bar{N}^-)\sigma\le u\le\psi(N^+)\right\}$ and $\psi^{-1}$ is the local inverse around $\bar{N}^-$. Since $\sigma A_\sigma\to 0$ when $\sigma\to0$, we deduce the following estimate for $\sigma$ small enough

\begin{equation}
\label{diffM}
    \|M_2-M_1\|_\infty\le \frac{\sigma A_\sigma}{1-\sigma A_\sigma}\|N_2-N_1\|_\infty.
\end{equation}

Analogously for the difference between $L_1$ and $L_2$ we get
\begin{equation}
\label{diffL}
    \|L_2-L_1\|_\infty\le \frac{\sigma B_\sigma}{1-\sigma B_\sigma}\|M_2-M_1\|_\infty,
\end{equation}
with $B_\sigma\coloneqq\max\left\{\left|\left(\psi^{-1}\right)'(u)\right|\colon \psi(\bar{N}^+)-(\bar{N}^+-\bar{N}^-)\sigma\le u\le\psi(N^+)\right\}$ and $\psi^{-1}$ is now considered as the local inverse around $\bar{N}^+$. Therefore we conclude from estimates \eqref{diffM} and \eqref{diffL} that $T$ a contraction and we get a unique $2\sigma$-periodic function $N(t)$ satisfying the conditions \ref{1stb} and \ref{2ndb} and solving \eqref{Nretard} on $(0,\sigma)$ and $(\sigma,2\sigma)$.

The next step is to prove that the constructed solution $N(t)$ verifies the condition \ref{3rdb}. From equation \eqref{Nretard} we deduce that
$$Q(t)\coloneqq\int_{t-\sigma}^{t} N(s)\,ds+\psi(N(t))$$
is piece-wise constant and we get the following equalities
\begin{equation}
    \begin{split}
        \psi(N(\sigma^-))-\psi(N(0^+))=\int_\sigma^{2\sigma} N(s)\,ds-\int_0^\sigma N(s)\,ds,\\
        \psi(N(2\sigma^-))-\psi(N(\sigma^+))=\int_0^\sigma N(s)\,ds-\int_\sigma^{2\sigma} N(s)\,ds.
        \end{split}
\end{equation}
Since $\psi(N(0^+))=\psi(\bar{N}^+)=\psi(\bar{N}^-)=\psi(N(2\sigma^-))$, we conclude that $\psi(N(\sigma^-))=\psi(N(\sigma^+))$. Moreover, we conclude that $\psi(N(t))$ is absolutely continuous and thus $Q(t)$ is constant and given by
$$Q(t)\equiv Q[\bar{N}^+]\coloneqq\psi(\bar{N}^+)+\int_\sigma^{2\sigma}N(s)\,ds.$$
This proves the first part of theorem.

Assume now the additional hypothesis $\psi(\underline{N})<1$ and $\psi(\underline{N}\pm\varepsilon)>1$, thus there exist two pairs $(\bar{N}^-_1,\bar{N}^+_1),\,(\bar{N}^-_2,\bar{N}^+_2)$ of positive numbers such that for $\sigma$ small enough, the following conditions hold
\begin{itemize}
    \item[$\bullet$] $\psi(\bar{N}^-_1)=\psi(\bar{N}^+_1)$ and $\psi(\bar{N}^-_2)=\psi(\bar{N}^+_2)$.
    \item[$\bullet$] $\bar{N}^-_1<\bar{N}^-_2<\underline{N}<\bar{N}^+_2<\bar{N}^+_1$.
    \item[$\bullet$] $\psi(\bar{N}^+_1)>1-\sigma\bar{N}^-_1$ and $\psi(\bar{N}^-_2)<1-\sigma\bar{N}^+_2$.
\end{itemize}

By applying Theorem~\ref{periodicn} we can construct two periodic solutions of the System \eqref{eq1} with respective masses $Q[\bar{N}^+_1]$ and $Q[\bar{N}^+_2]$, so we need to find a constant $N^+_0>0$ such that $Q[N^+_0]=1$. Observe we have the following inequalities
\begin{equation}
\begin{split}
        Q[\bar{N}^+_1]>\psi(\bar{N}^+_1)+\sigma\bar{N}^-_1>1\\
        Q[\bar{N}^+_2]<\psi(\bar{N}^+_2)+\sigma\bar{N}^+_2<1.
\end{split}
\end{equation}
Since $Q[\cdot]$ is continuous with respect to the variable $\bar{N}^+$, we conclude by applying the intermediate value theorem the existence of $N^-_0\in (\bar{N}^-_1,\bar{N}^-_2)$ and $N^+_0\in(\bar{N}^+_2,\bar{N}^+_1)$ such that $Q[N^+_0]=1$. Therefore the corresponding periodic solution $N(t)$ of the delay equation \eqref{Nretard} satisfies the equation \eqref{mass} and the conditions \ref{1stb}, \ref{2ndb} and \ref{3rdb}.
\end{proof}

\begin{figure}[ht!]
    \centering
    \includegraphics[scale=0.6]{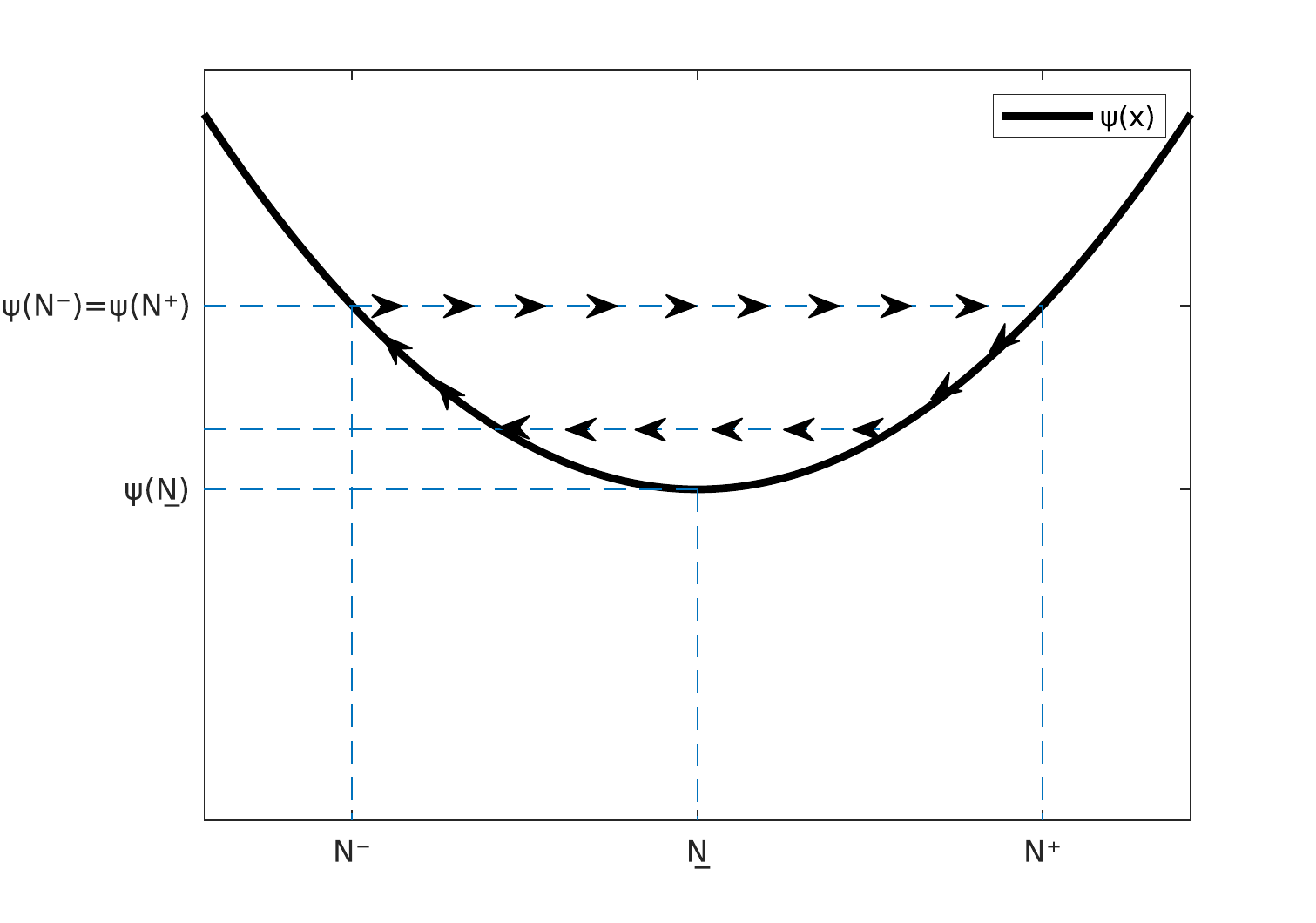}
        \\[-5pt]
    \caption{An example of $\psi$ with a local minimum at $\underline{N}$. The trajectory in arrows over the graph of $\psi$ indicates the variations of periodic solution $N(t)$ and the horizontal arrows indicate a jump discontinuity, which preserves the value of $\psi(N)$ at these jumps.} 
\end{figure}

\begin{rmk}
We can also construct $2\sigma$-periodic solutions $N(t)$ of \eqref{Nretard} around a local maximum of $\psi$, which are piece-wise strictly increasing and preserve the value of $\psi$ at jump discontinuities.
\end{rmk}

\section{Numerical simulations}
\label{numerical}

In order to illustrate the theoretical results of the previous sections, we present numerical results for different networks with multiple steady states, with different types of convergence to equilibrium, and with periodic solutions with jump discontinuities. The numerical illustrations we present below are obtained by solving the equation~\eqref{eq1} with a classical first-order upwind scheme. 

\subsection{Example 1: Convergence to different steady states}
Our first example is a numerical simulation with
multiple steady states. For this example we choose
\begin{equation}
\label{phi1}
    \varphi(N)=\frac{1}{1+e^{-9N+3.5}},\quad\sigma=\frac{1}{2}.
\end{equation}

\begin{figure}[ht!]
    \centering
    \includegraphics[scale=0.59]{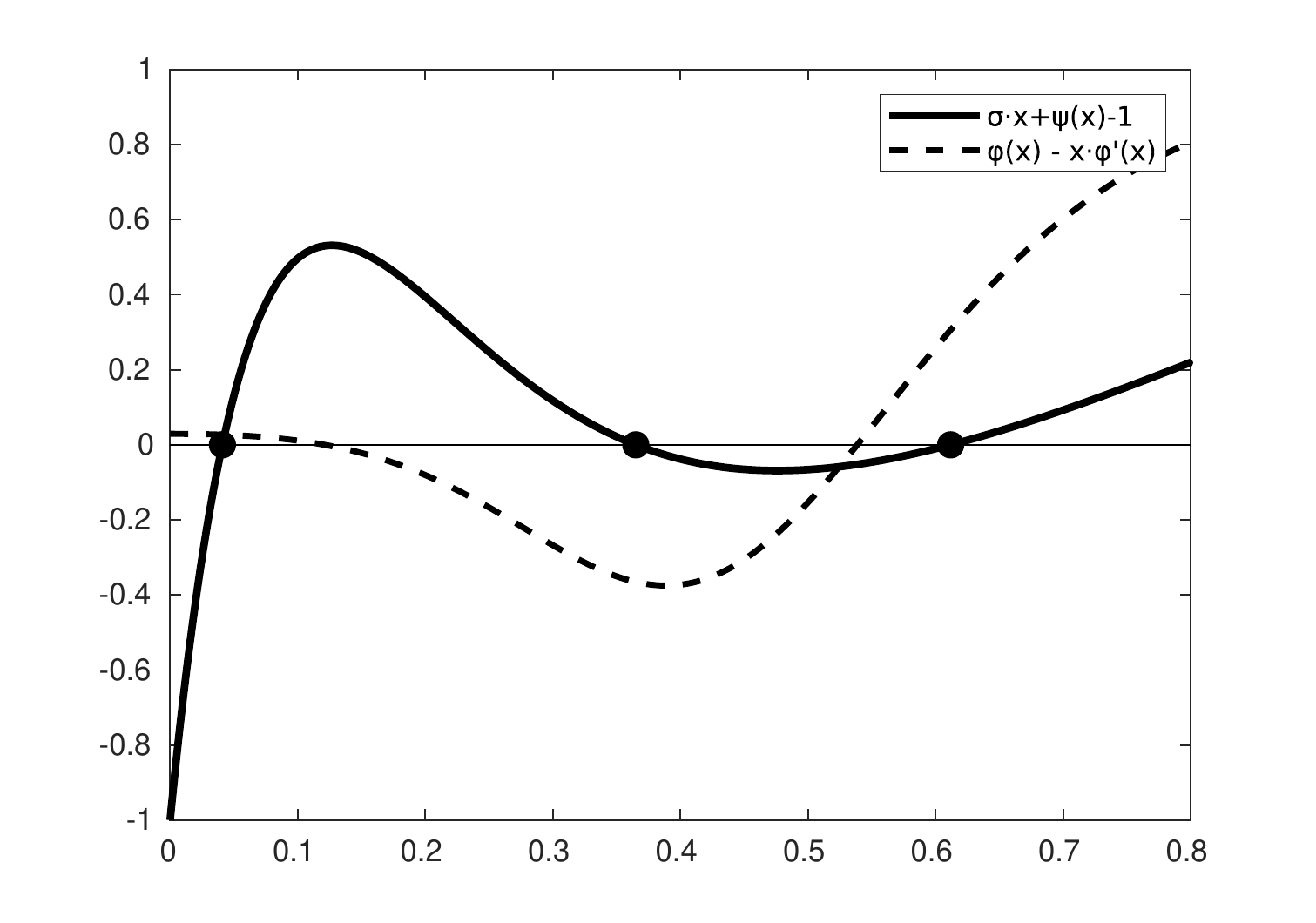}
    \\[-5pt]
    \caption{
      Graphical representation of the three equilibria and
      the sign of $\psi'$, for examples 1 and 2. Two of them are in the region
      where $\psi'>0$ and the other one is the region where
      $\psi'<0$.}
    \label{3equi}
\end{figure}
In this case $\psi'$ changes sign twice since $\varphi(u)-u\varphi'(u)$ does and from equation \eqref{neq} we get three steady states given by $N_*^1\approx 0.0410,\,N_*^2\approx 0.3650$ and $N_*^3\approx 0.6118$, as we observe in Figure \ref{3equi}.
\begin{figure}[ht!]
    \centering
    \begin{subfigure}{0.4\textwidth}
		\includegraphics[width=\textwidth]{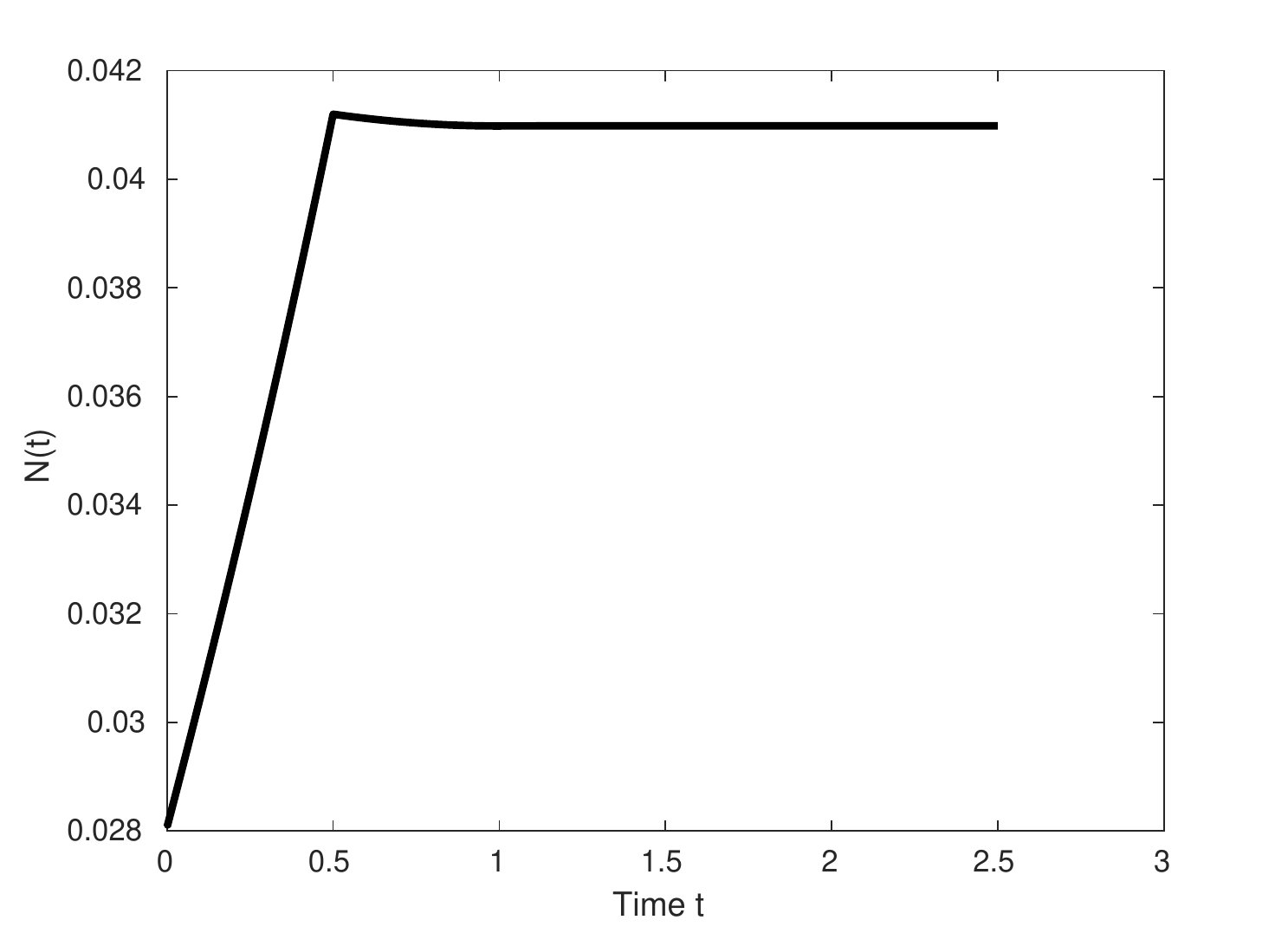}
		 \caption{Activity $N(t)$ for $N(0)=N_0^1$.}
		\label{CaseN1}
	\end{subfigure}
	\begin{subfigure}{0.4\textwidth}
		\includegraphics[width=\textwidth]{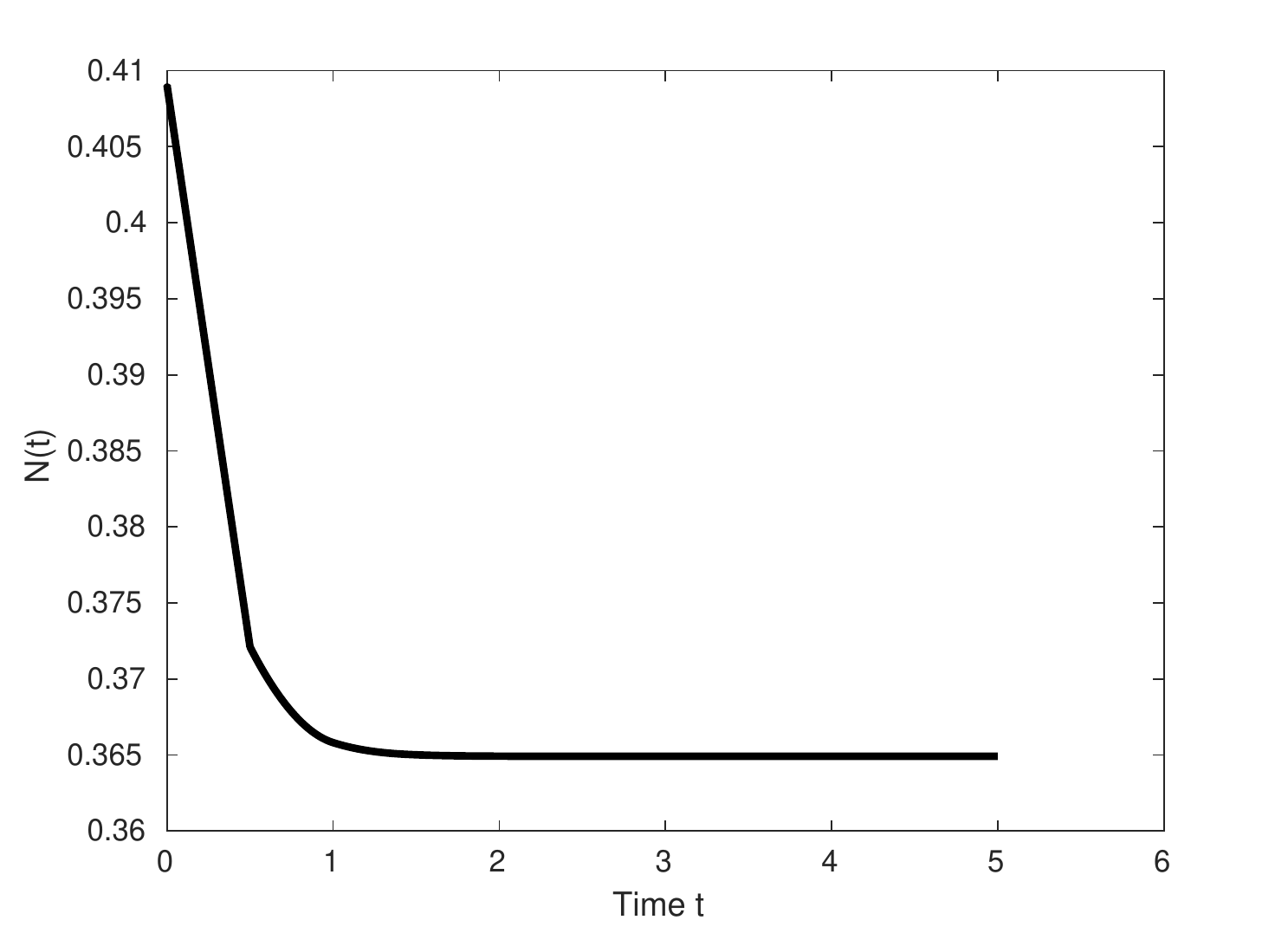}
		 \caption{Activity $N(t)$ for $N(0)=N_0^2$.}
		\label{CaseN2}
	\end{subfigure}
		\begin{subfigure}{0.4\textwidth}
		\includegraphics[width=\textwidth]{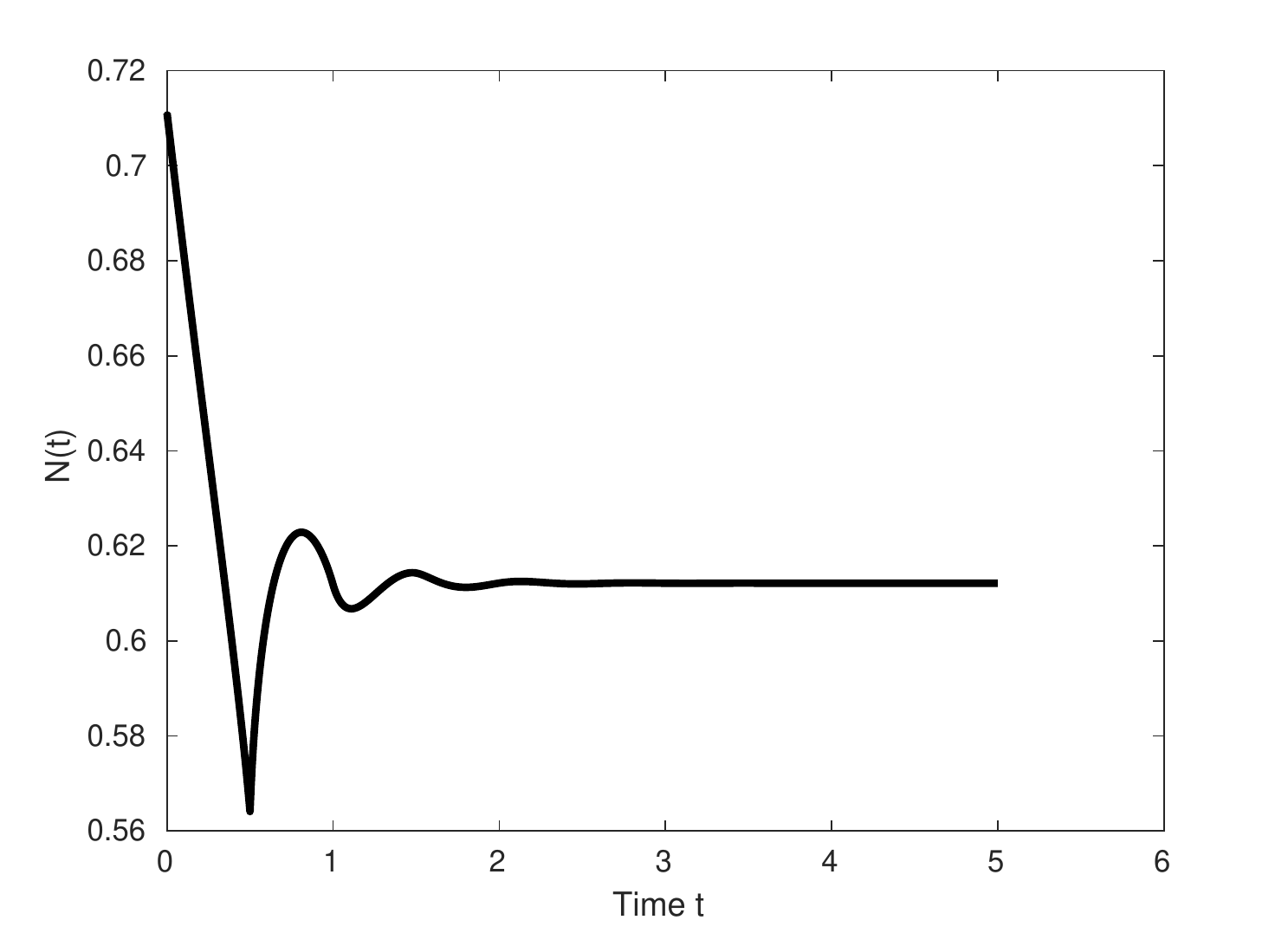}
		 \caption{Activity $N(t)$ for $N(0)=N_0^3$.}
		\label{CaseN3}
	        \end{subfigure}
	          \\[-5pt]
                \caption{{\bf Example 1.}
                  Activities of different solutions with
                  $n_0(s)=\frac{1}{2}e^{-(s-1)_+}$ for different
                values of $N(0)$.}
\end{figure}

When we take $n_0(s)=\frac{1}{2}e^{-(s-1)_+}$ as the initial data, we have three different solutions for $N(0)$ determined by the equation
\begin{equation}
\label{multipleN0}
N(0)=\varphi(N(0))\int_\sigma^\infty n_0(s)\,ds,    
\end{equation}
that are given by $N_0^1\approx 0.0281,\,N_0^2\approx 0.4089,\,N_0^3\approx 0.7114$. These values determine three different branches of solutions, which numerically converge to their respective steady states.
In Figure \ref{CaseN1} we observe that $N(t)$ is increasing in $[0,\sigma]$ and then approaches to the value $N_*^1$, which corresponds to a convergence to equilibrium according to Theorem~\ref{convthm}. Moreover we observe in Figure \ref{CaseN2} that $N(t)$ converges monotonically to $N_*^2$, which satisfies $\psi'(N_*^2)<0$. This is compatible with Theorem~\ref{monotoneconv} in the case when $\psi'(N(t))$ remains negative for all $t\ge0$. Finally in Figure~\ref{CaseN3} we observe that $N(t)$ converges to $N_*^3$ in the same way stated in Theorem~\ref{convthm} with $\psi'(N_*^3)>0$.

\subsection{Example 2: Possible jump discontinuities}
Under the same $\varphi$ defined in \eqref{phi1}, consider now as initial data  $n_0(s)=e^{-(s-0.5)}\mathds{1}_{\{s>0.5\}}$. In this case we also get three possible solutions for $N(0)$ in Equation \eqref{multipleN0}, which are given by $N_0^1\approx 0.0423,\,N_0^2\approx 0.2887,\,N_0^3\approx 0.9958.$

\begin{figure}[ht!]
    \centering
    \begin{subfigure}{0.4\textwidth}
		\includegraphics[width=\textwidth]{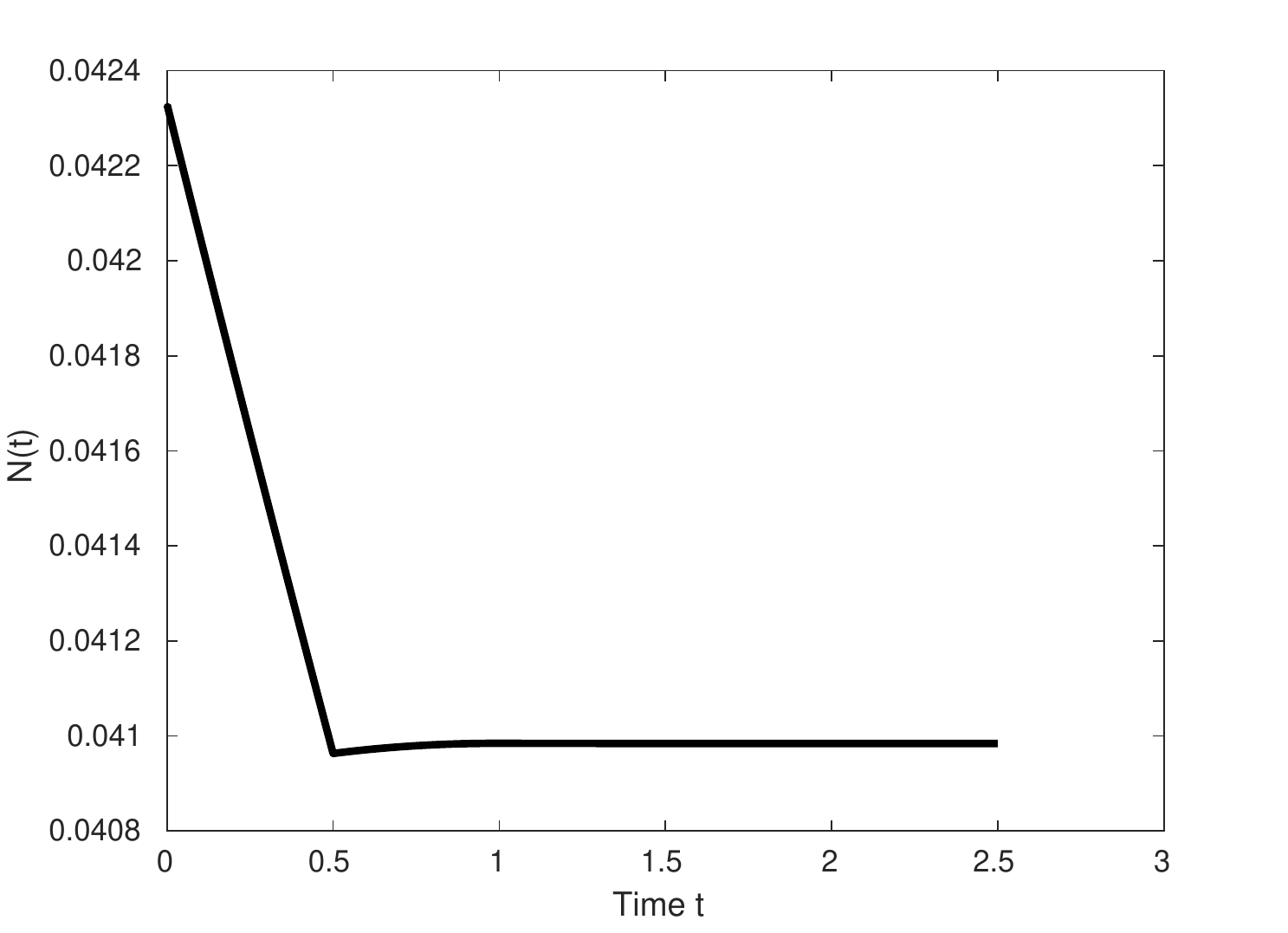}
		 \caption{Activity $N(t)$ for $N(0)=N_0^1$.}
		\label{CaseJN1}
	\end{subfigure}
	\begin{subfigure}{0.4\textwidth}
		\includegraphics[width=\textwidth]{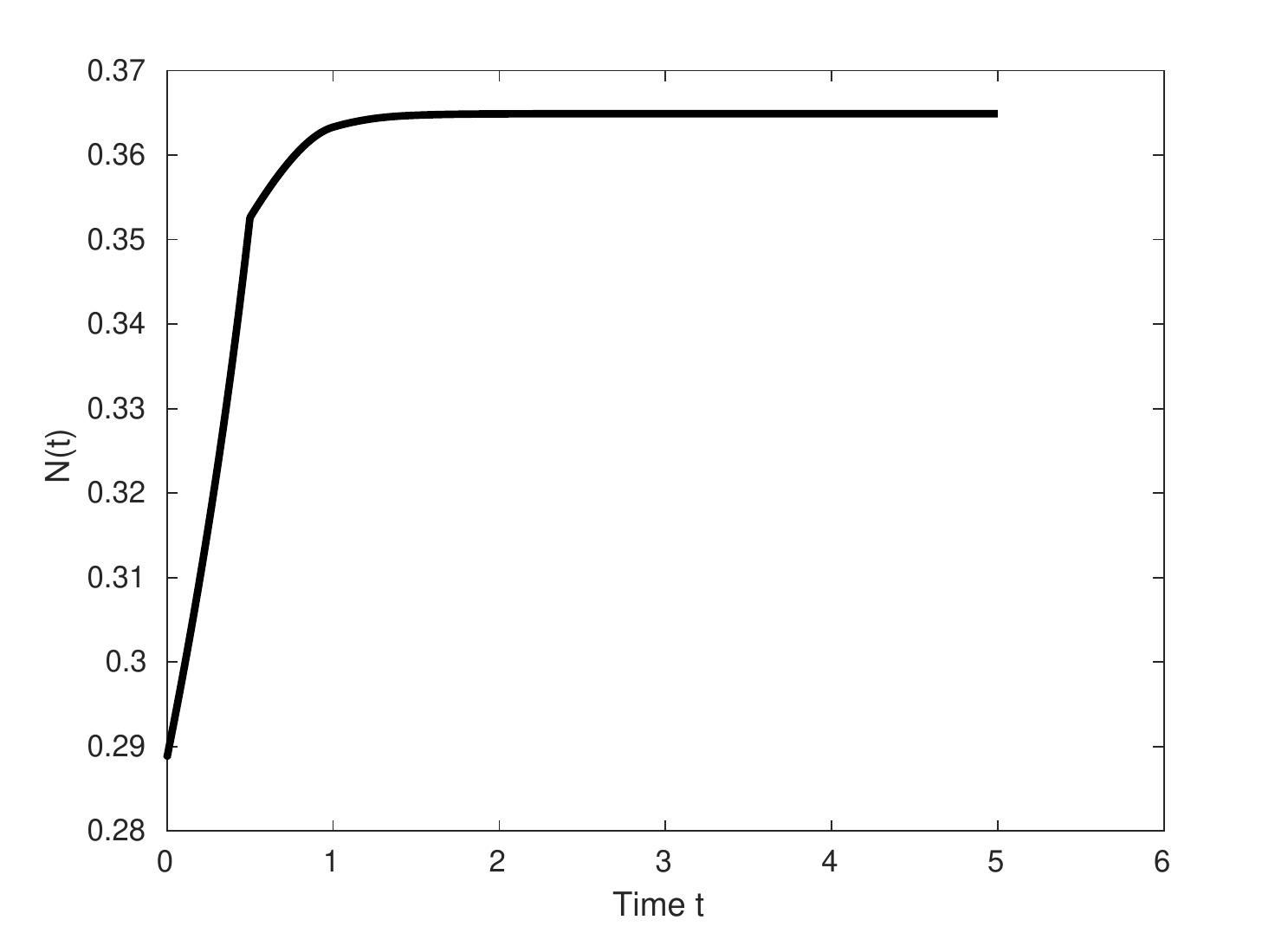}
		 \caption{Activity $N(t)$ for $N(0)=N_0^2$.}
		\label{CaseJN2}
	\end{subfigure}
	\begin{subfigure}{0.4\textwidth}
		\includegraphics[width=\textwidth]{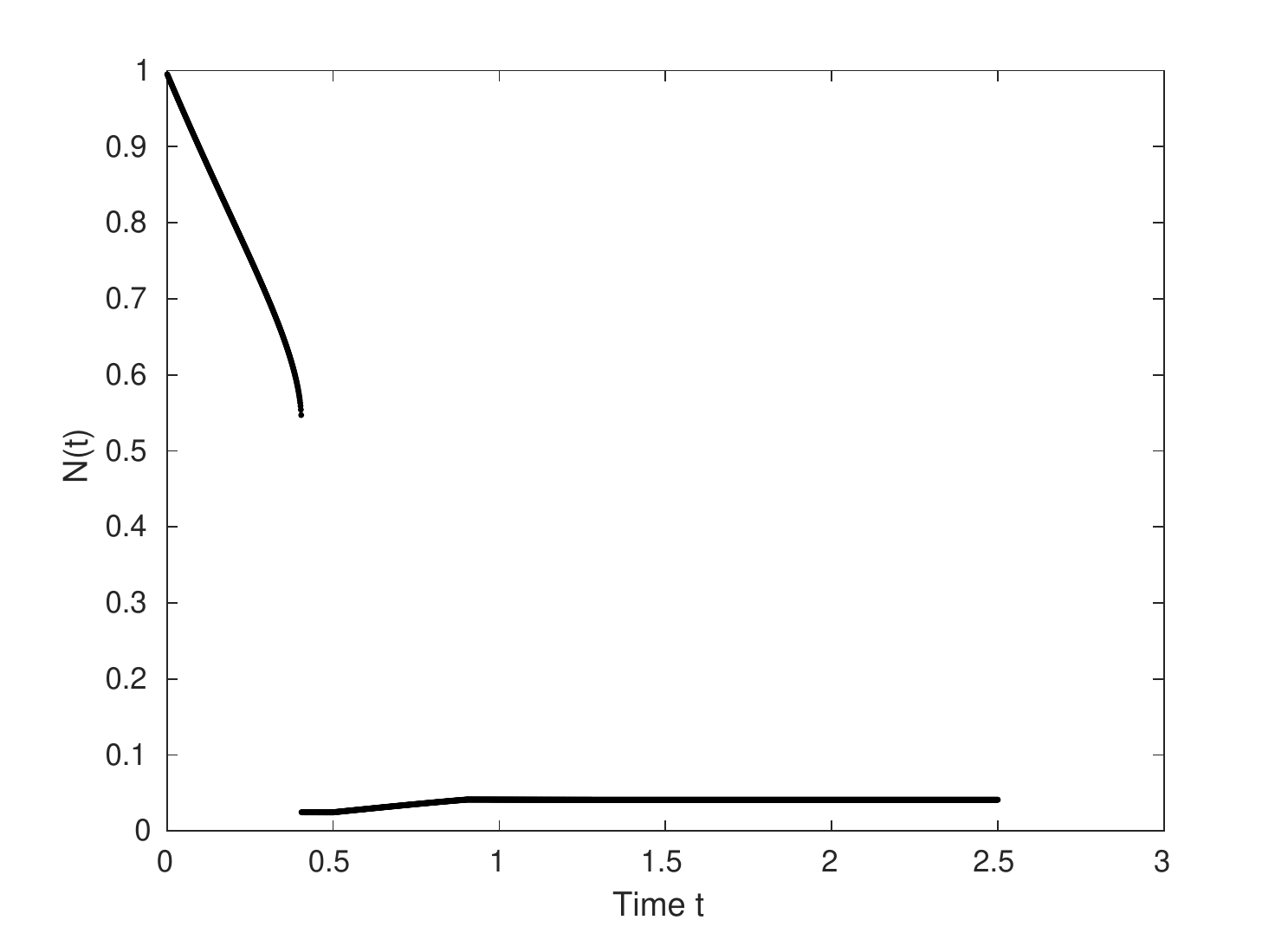}
		 \caption{Activity $N(t)$ for $N(0)=N_0^3$.}
		\label{CaseJN3}
	\end{subfigure}
	\begin{subfigure}{0.4\textwidth}
		\includegraphics[width=\textwidth]{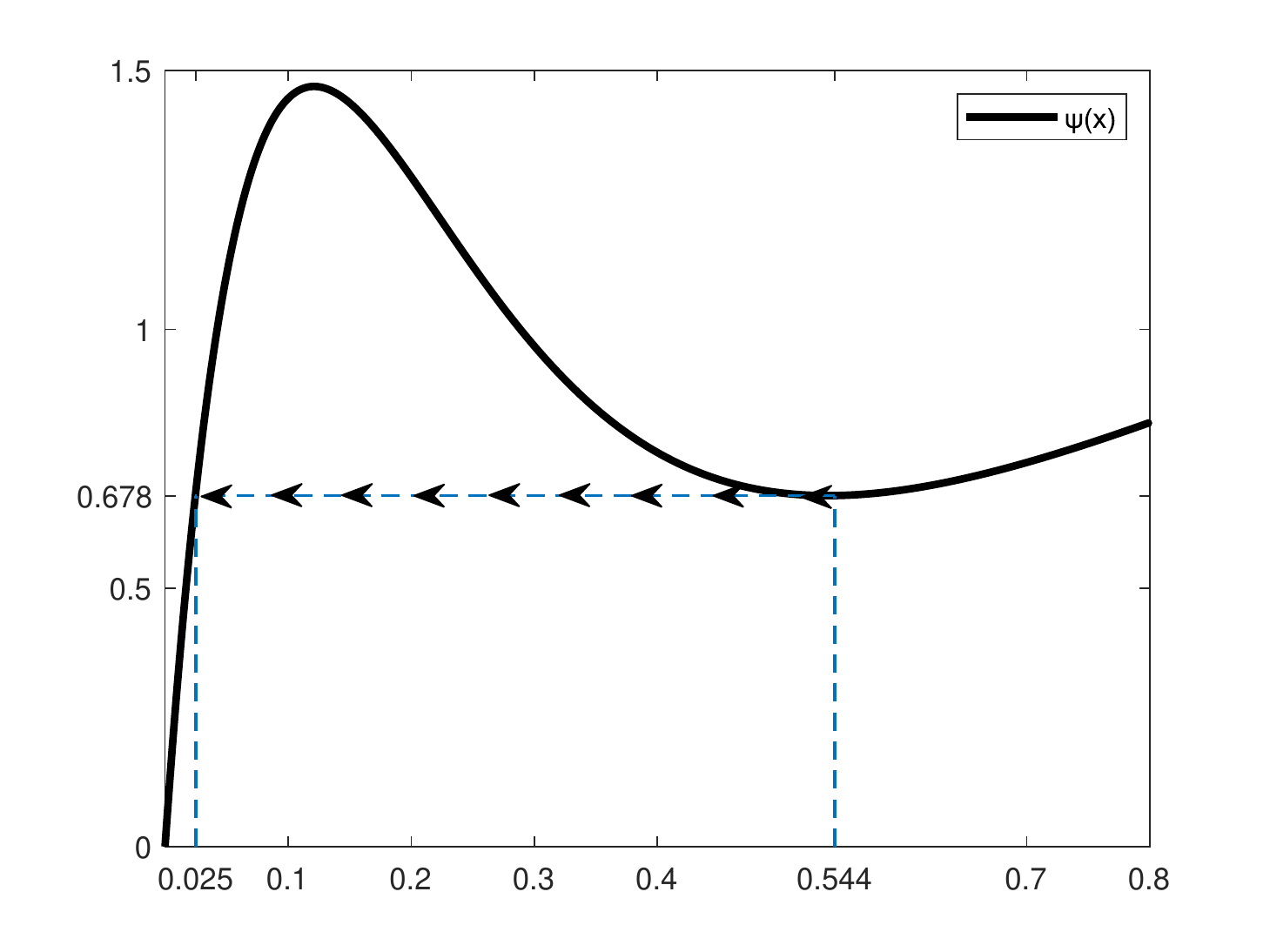}
		\caption{Jump discontinuity for the branch  of
                  $N(0)=N_0^3$ along the graph of $\psi$.}
        \label{Simplejump}
	\end{subfigure}
	  \\[-5pt]
        \caption{{\bf Example 2.}
          Activities of different solutions with
          $n_0(s)=e^{-(s-0.5)}\mathds{1}_{\{s>0.5\}}$
        for different
                values of $N(0)$.}
\end{figure}
In Figure~\ref{CaseJN1} we observe that $N(t)$ is decreasing in $[0,\sigma]$ and then approaches to the value $N_*^1$, which corresponds again to a convergence to equilibrium according to Theorem~\ref{convthm}. In Figure \ref{CaseJN2} we observe that $N(t)$ converges monotonically to $N_*^2$, which satisfies $\psi'(N_*^2)<0$. In this case the solution is increasing and it corresponds to the behaviour stated in Theorem~\ref{monotoneconv}. Moreover in Figure \ref{CaseJN3} we observe that $N(t)$ has a jump discontinuity at some $t_0\in(0,\sigma)$ that causes the solution to change to the branch of $N_0^1$ and then $N(t)$ converges to $N_*^1$ afterwards.  At the jump time, the solution preserves the value of $\psi$ as we show in Figure \ref{Simplejump}. The horizontal arrows represent the change of $N(t)$ along the graph of $\psi$ at this discontinuity.

\subsection{Example 3: Periodic solutions}
To describe periodic solutions we simulate two different examples.

{\em Example 3.1.} 
We consider the firing rat determined by
$$\varphi(N)=\max\{\min(1.6\,N,1),0.25\},\quad \sigma=1.$$
From equation \eqref{neq}, there exists a unique steady state with $N_*=0.375$ and $\psi'(N_*)=0$. Moreover, we observe in Figure \ref{CasePN1} that the solution with initial data $n_0(s)=e^{-s}$ converges to a $\sigma$-periodic solution of \eqref{mass}, which is piece-wise constant whose values oscillate between $N_1=\frac{0.25}{1.6}$ and $N_2=\frac{1}{1.6}$. This periodic profile is an example of the type of solutions presented in Theorem~\ref{piecewiseperiodic}.

\begin{figure}[ht!]
    \centering
    \includegraphics[scale=0.5]{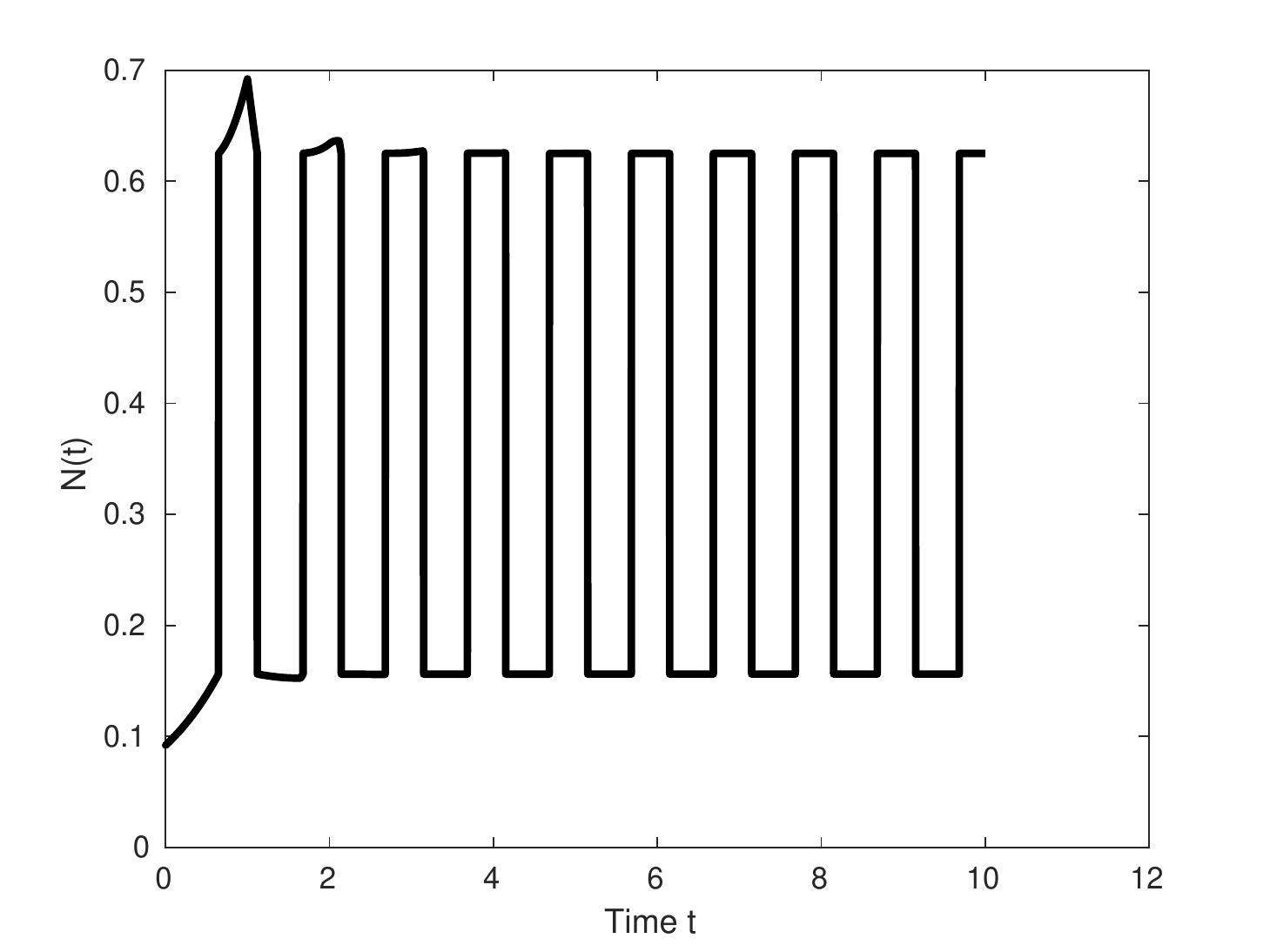}
      \\[-5pt]
    \caption{{\bf Example 3.1.} Activity $N(t)$ for $n_0(s)=e^{-s}$.}
    \label{CasePN1}  
\end{figure}

{\em Example 3.2.}
Next, we consider the firing rate determined by
$$\varphi(N)=\frac{10 N^2}{N^2+1}+0.5,\quad\sigma=1,$$
From equation \eqref{neq} there exists a unique steady state with $N_*\approx0.8186$ and $\psi'(N_*)<0$. For the initial data $n_0(s)=e^{-(s-1)}\mathds{1}_{\{s>1\}}$, we observe in Figure \ref{CasePN2} that the solution is asymptotic to a periodic pattern with jump discontinuities. The period is larger than $1$ since $\psi(N(t))$ is not converging to a constant as we see in Figure \ref{PsiCasePN2}.

\begin{figure}[ht!]
    \centering
    \begin{subfigure}{0.4\textwidth}
    \includegraphics[width=\textwidth]{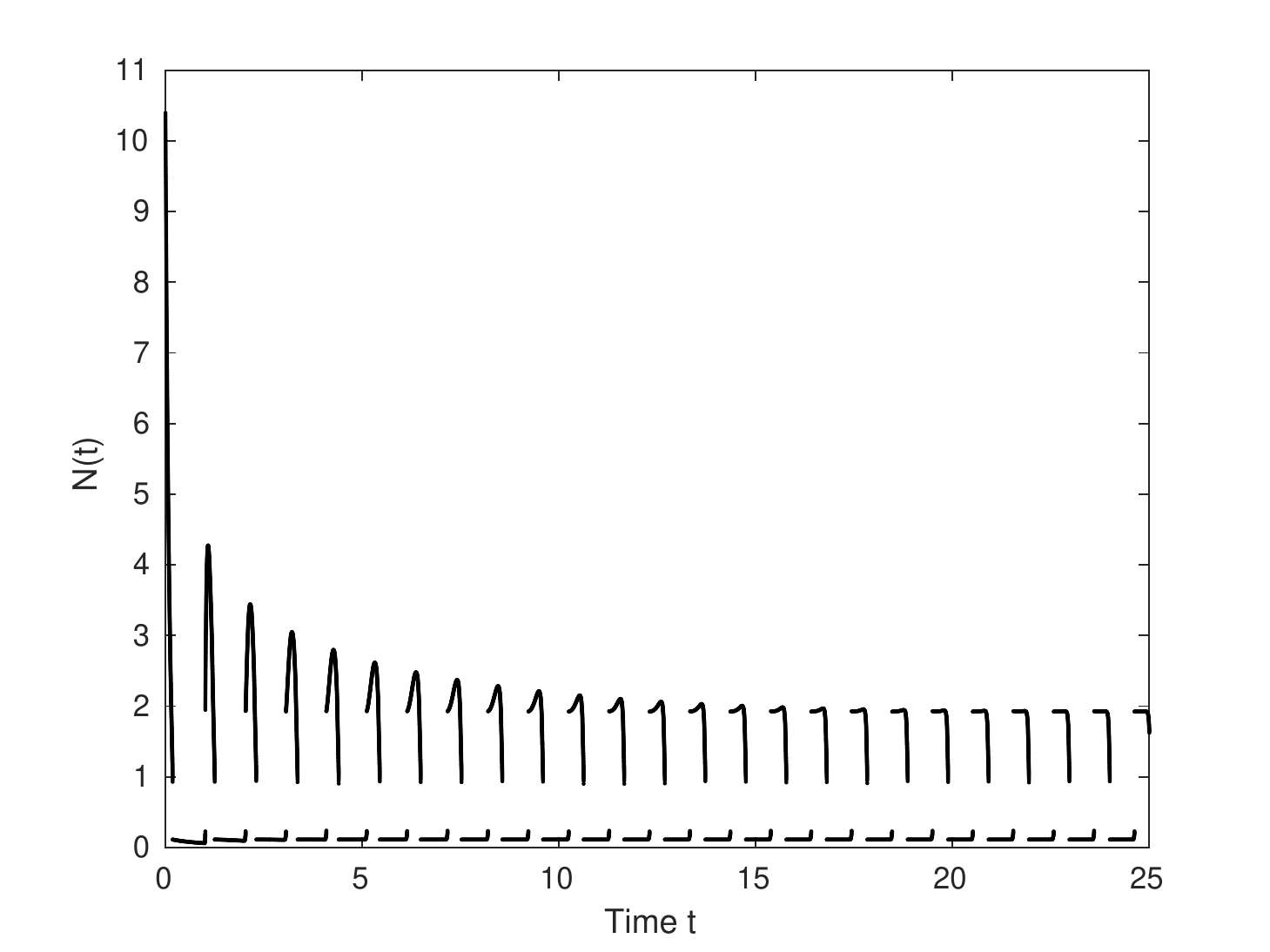}
    \caption{Activity $N(t)$.}
    \label{CasePN2}
    \end{subfigure}
    \begin{subfigure}{0.4\textwidth}
    \includegraphics[width=\textwidth]{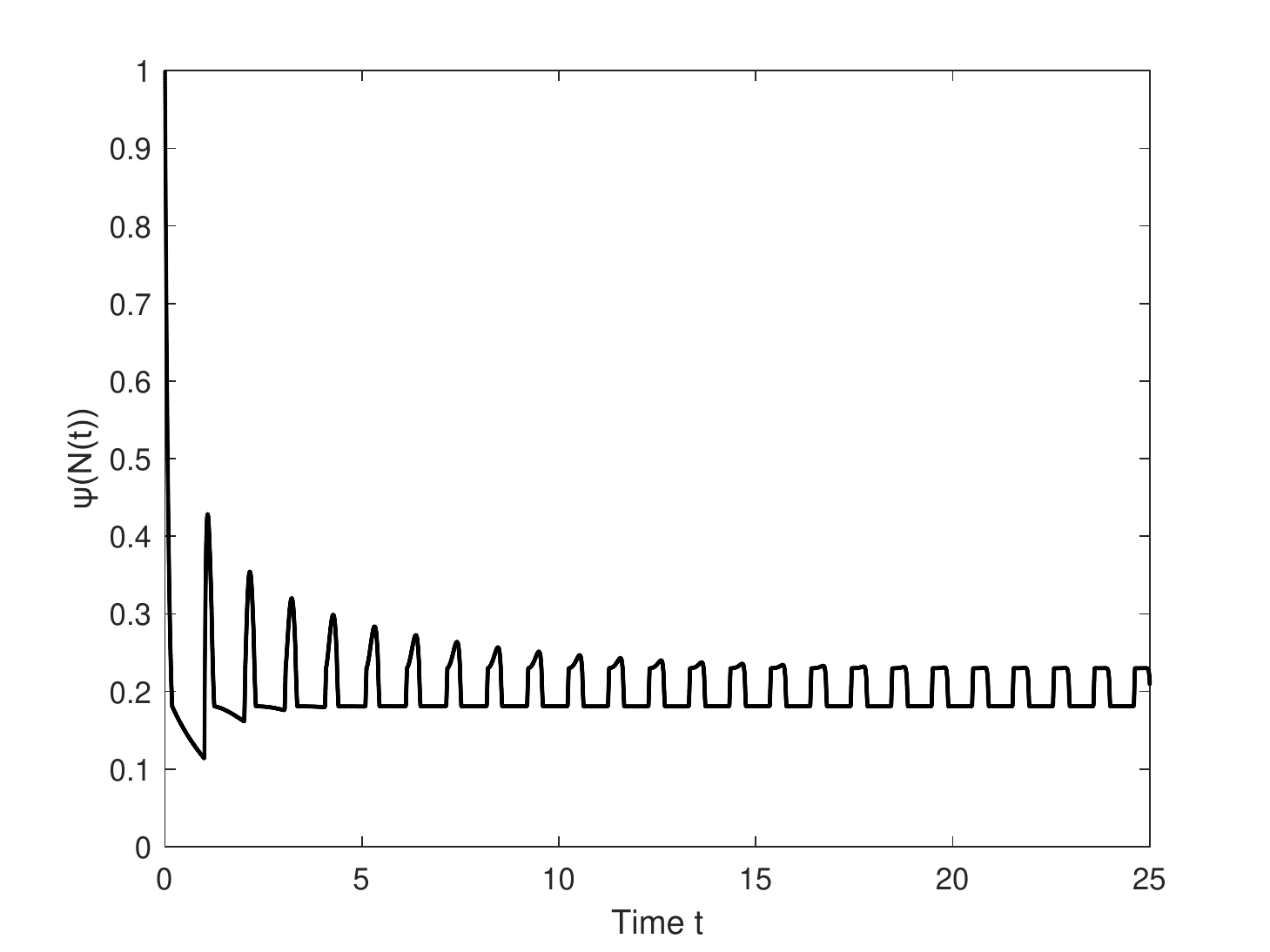}
    \caption{Graph of $\psi(N(t))$.}
    \label{PsiCasePN2}
    \end{subfigure}
      \\[-5pt]
    \caption{{\bf Example 3.2.} Behavior for $n_0(s)=e^{-(s-1)}\mathds{1}_{\{s>1\}}$.}
\end{figure}

\subsection{Example 4: A non monotone firing rate}
Since dynamics in Equation \eqref{eq1} depend heavily on the function $\psi$, theoretical results are valid not only in the strictly excitatory or inhibitory case. This allows to include non monotone examples of functions $\varphi$, which represents a more realistic assumption in the model.

For this example we choose the firing rate determined by
$$\varphi(N)=8e^{-(N-0.1)^2}+8e^{-(N-3)^2},\quad\sigma=\frac{1}{5}.$$
Unlike of previous examples, the function $\varphi$ is non monotone as we see in Figure \ref{Nonmonotone}. 
\begin{figure}[ht!]
    \centering
    \includegraphics[scale=0.58]{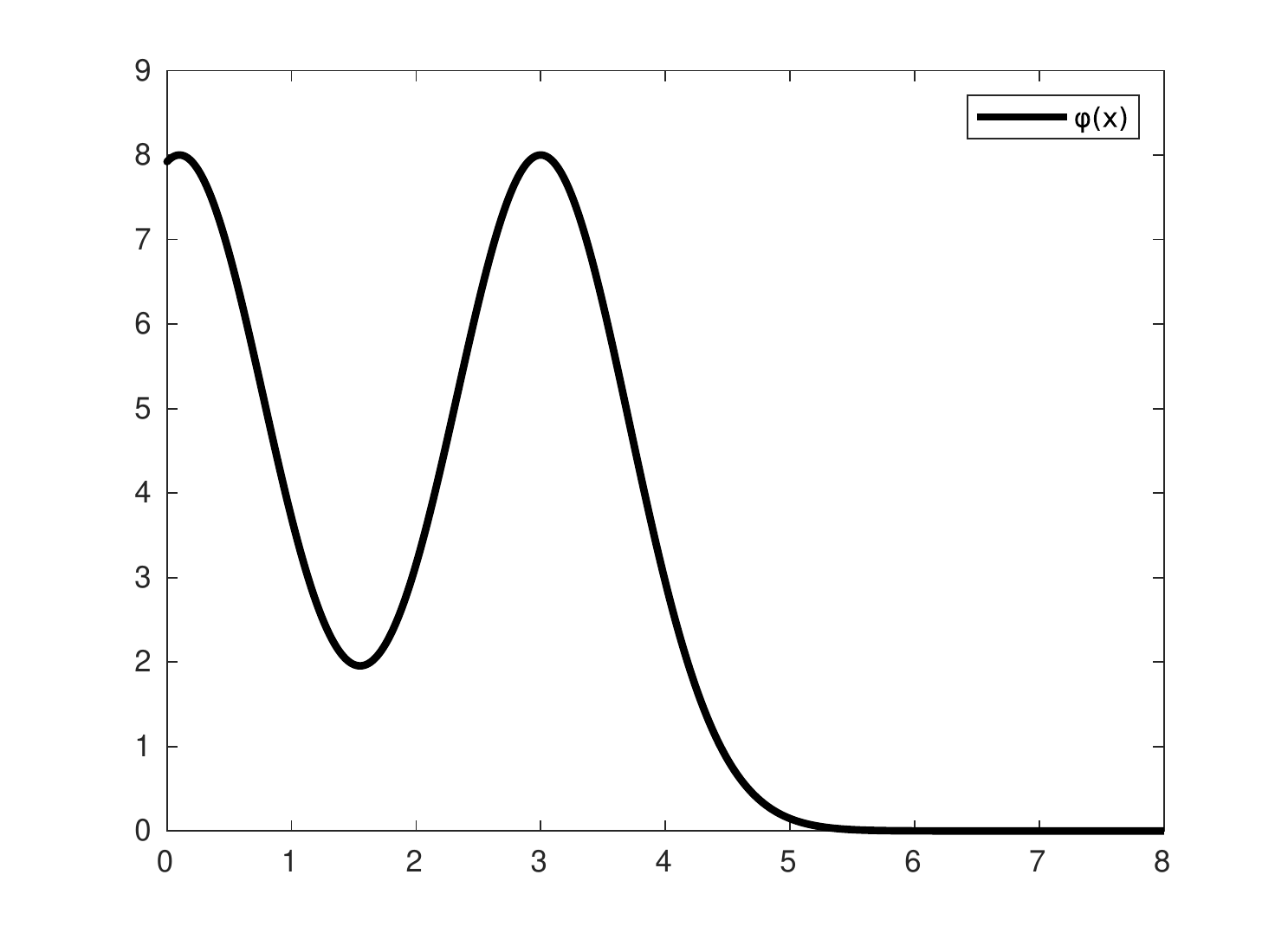}
      \\[-5pt]
    \caption{ Graphical representation of $\varphi$, which changes two times from a excitatory regime to an inhibitory one.}
    \label{Nonmonotone}  
\end{figure}

In this case $\psi'$ changes sign and there exists three steady states with $N_*^1\approx 1.4423,\,N_*^2\approx2.0695,\,N_*^3\approx3.0711$, as we note in Figure \ref{PsiCaseNonMon}.

\begin{figure}[ht!]
    \centering
    \includegraphics[scale=0.58]{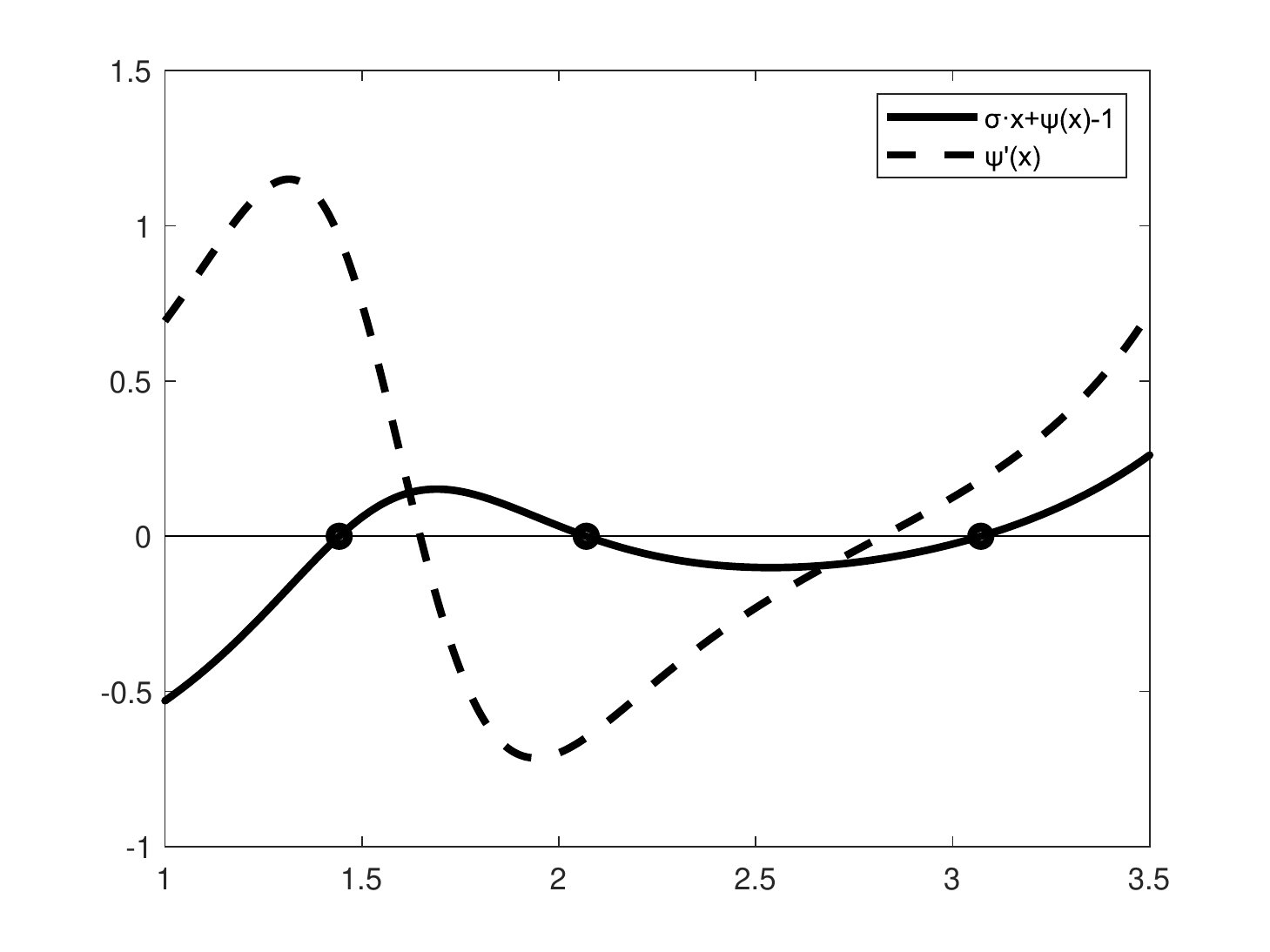}
      \\[-5pt]
    \caption{
      Graphical representation of the three equilibria and
      the sign of $\psi'$, for example 4. Two of them are in the region
      where $\psi'>0$ and the other one is the region where
      $\psi'<0$.}
    \label{PsiCaseNonMon}  
\end{figure}

When we take as initial data $n_0(s)=\frac{2}{3}(1+\cos(s))e^{-s}$ we get three possible solutions for $N(0)$ in Equation \eqref{multipleN0}, which are given by $N_0^1\approx 1.4976,\,N_0^2\approx 1.8163,\,N_0^3\approx 3.7037$. These values determine three different branches of solutions.
\begin{figure}[ht!]
    \centering
    \begin{subfigure}{0.4\textwidth}
		\includegraphics[width=\textwidth]{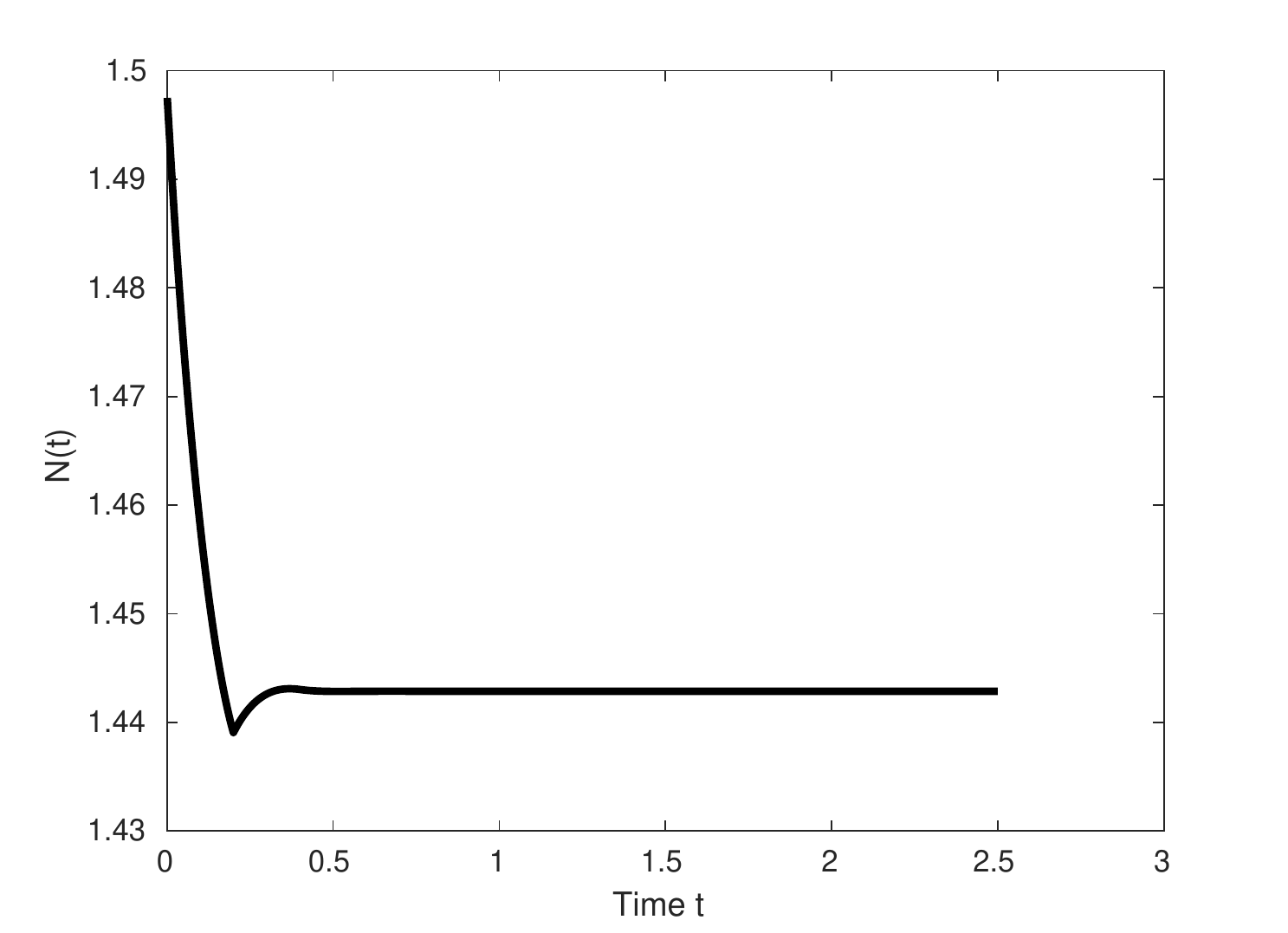}
		 \caption{Activity $N(t)$ for $N(0)=N_0^1$.}
		\label{CaseNExIn1}
	\end{subfigure}
	\begin{subfigure}{0.4\textwidth}
		\includegraphics[width=\textwidth]{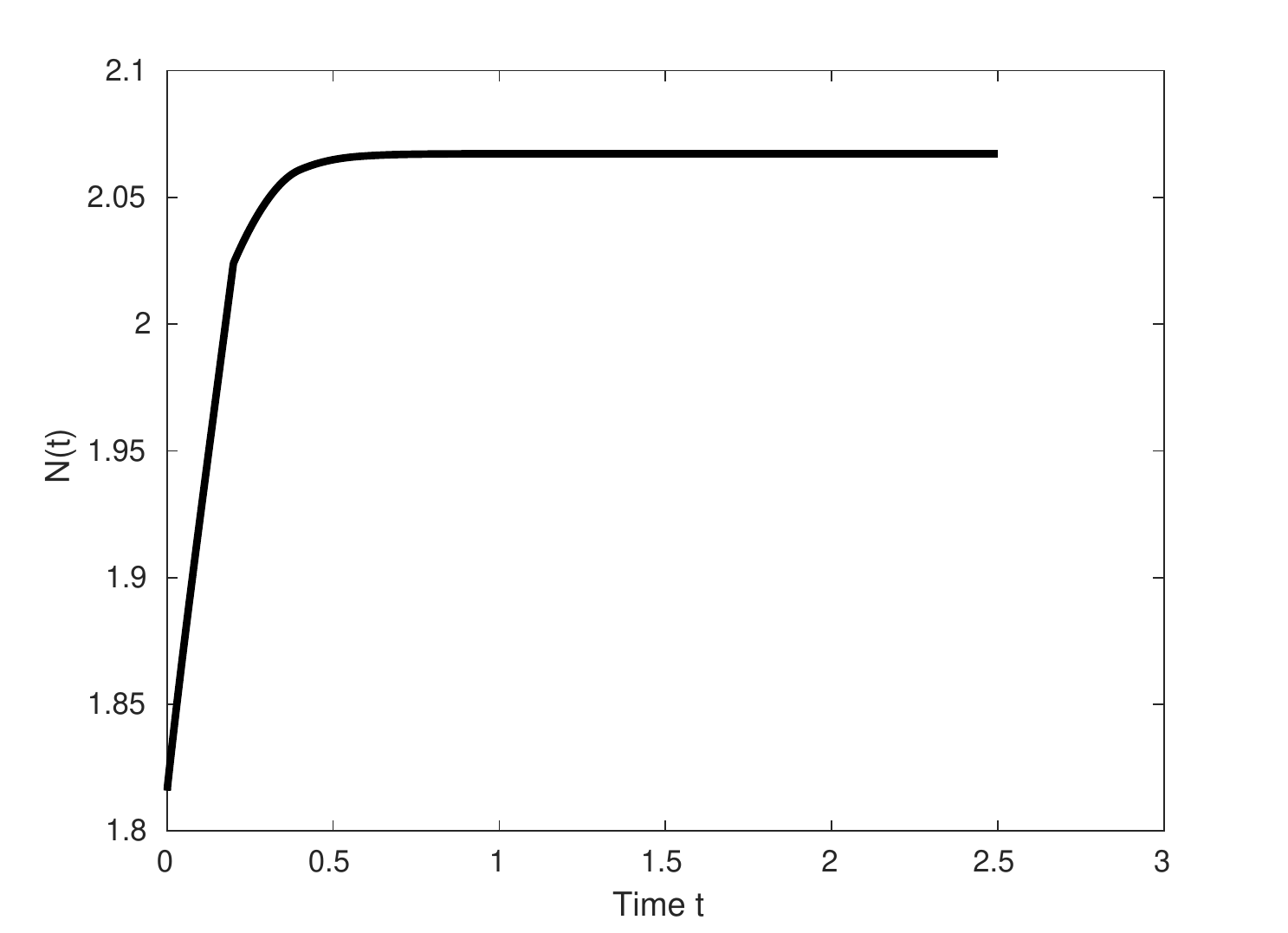}
		 \caption{Activity $N(t)$ for $N(0)=N_0^2$.}
		\label{CaseNExIn2}
	\end{subfigure}
	\begin{subfigure}{0.4\textwidth}
		\includegraphics[width=\textwidth]{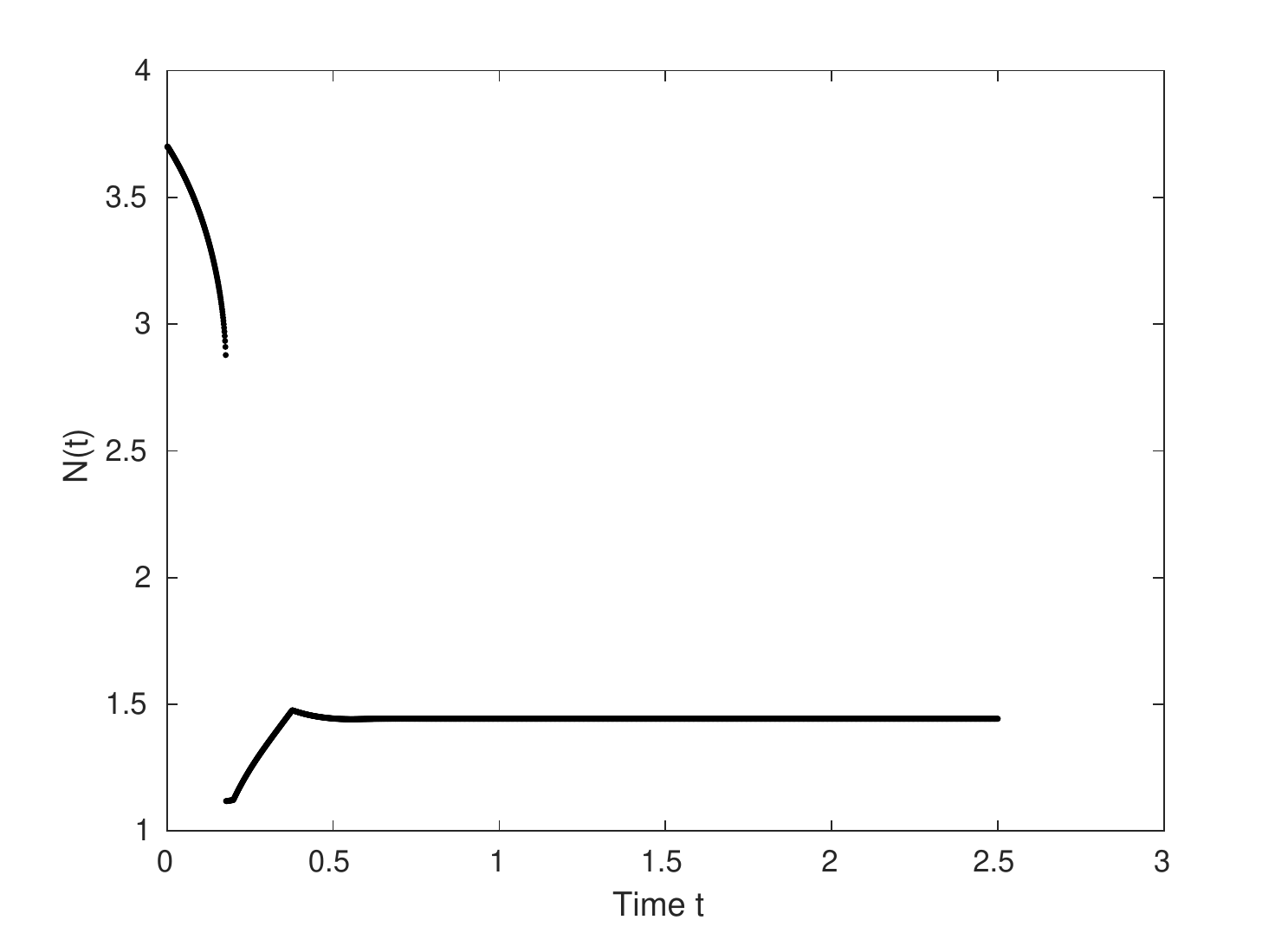}
		 \caption{Activity $N(t)$ for $N(0)=N_0^3$.}
		\label{CaseNExIn3}
	\end{subfigure}
	\begin{subfigure}{0.4\textwidth}
		\includegraphics[width=\textwidth]{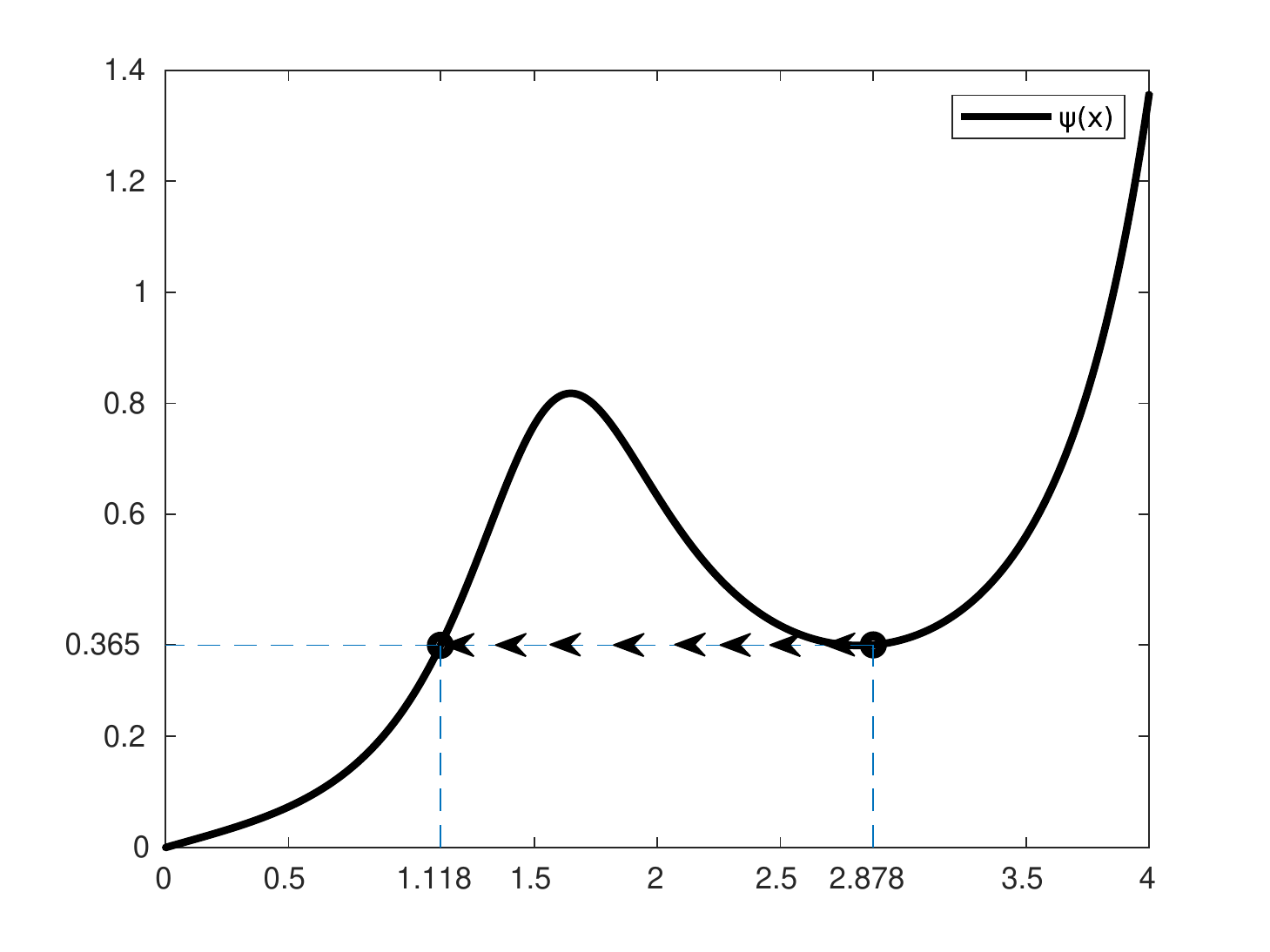}
		\caption{Jump discontinuity for the branch  of
                  $N(0)=N_0^3$ along the graph of $\psi$.}
        \label{SimplejumpExIn}
	\end{subfigure}
	  \\[-5pt]
        \caption{{\bf Example 4.}
          Activities of solutions with $n_0(s)=\frac{2}{3}(1+\cos(s))e^{-s}$
        for different values of $N(0)$.}
\end{figure}

In Figure \ref{CaseNExIn1} we observe that  $N(t)$ decreasing on $[0,\sigma]$ and the solution converges to the steady stated determined by $N_*^1$, which corresponds to the behaviour stated in Theorem \ref{convthm}. In Figure \ref{CaseNExIn2} we see that $N(t)$ converges monotonically to $N_*^2$, which $\psi'(N_*^2)<0$. In this case the solution is increasing and it corresponds to the behaviour stated in Theorem~\ref{monotoneconv}. Furthermore in Figure \ref{CaseNExIn3} we observe that $N(t)$ has a jump discontinuity at some $t_0\in(0,\sigma)$ that causes the solution to change to the branch of $N_0^1$ and then $N(t)$ converges to $N_*^1$ afterwards. At this jump time, the solution preserves the value of $\psi$ as we show in Figure \ref{SimplejumpExIn}. The horizontal arrows represent the change of $N(t)$ along the graph of $\psi$ at this discontinuity. We observe essentially the same behaviors of Example 2.

\section{Perspectives}
In the present  analysis of System \eqref{eq1} with the firing rate $p$ modulated by amplitude as given by \eqref{pfire}, we have exhibited some possible qualitative behaviours of solutions. Steady-state convergence always occurs in the  inhibitory and the weakly excitatory networks. And this can also occur for strongly excitatory connections, in particular situations. Periodic solutions can be built for strongly excitatory connections. Our method is based on the derivation of a nonlinear delay equation for the network activity. Moreover, numerical simulations are consistent with the theoretical results obtained about the convergence to equilibrium and the existence of jump discontinuities. 
 This study provides  possible behaviors which might arise for a more general firing rate. From this particular example of firing rate, we can think that the model induces an implicit delay, which is consistent with the interpretation of the discharge dynamics in the elapsed time model. 
 
Regarding convergence to equilibrium, we conjecture that the convergence rate in Theorem~\ref{convthm} for the inhibitory and weakly excitatory case is exponential, as it occurs in~\cite{PPD2} for a variant of the firing rate $p$, which is also given by an indicator function. Moreover, we expect the convergence in Theorem~\ref{monotoneconv} for monotone solutions in the strongly excitatory case to be exponential as well.

Concerning the existence of periodic solutions, it remains open to prove the existence of periodic continuous solutions when $p$ is not locally a linear function and also to find periodic solutions with a period other than a multiple of $\sigma$. With respect to stability of periodic orbits, an interesting question would be to determine what kind of piece-wise monotone solutions for the activity are stable in the excitatory case and if exponential convergence to this type of profile arises. We conjecture that the stable solutions correspond to those with few jump discontinuities in general.

\section*{Acknowledgements}
This project has received funding from the European Union's Horizon 2020 research and innovation program under the Marie Sklodowska-Curie grant agreement No 754362. It has also received support from ANR ChaMaNe No: ANR-19-CE40-0024.
\\[-35pt]
\begin{center}
    \includegraphics[scale=0.04]{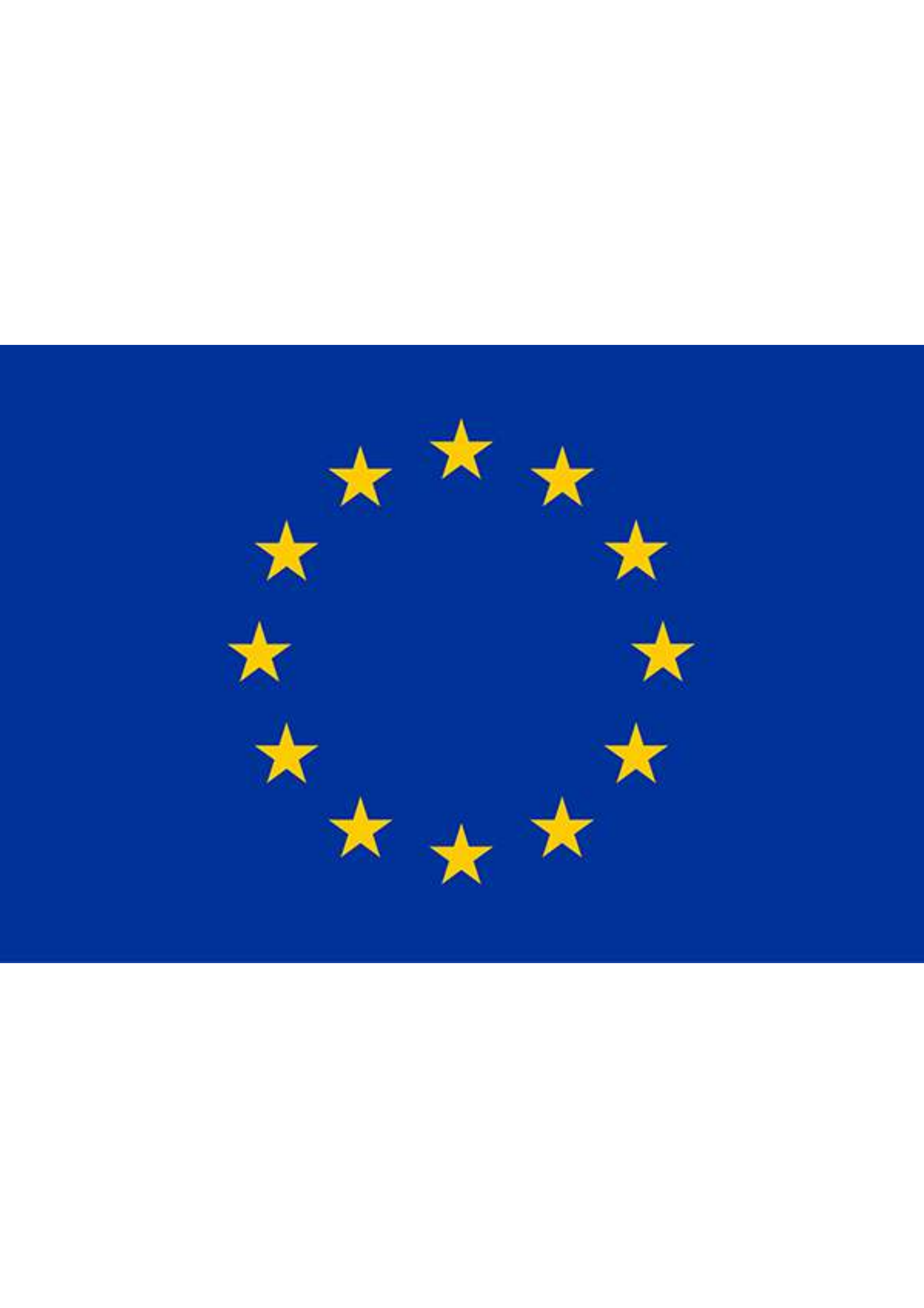}
\end{center}
\nocite{*}
\bibliography{biblio.bib}

\begin{thebibliography}{10}

\bibitem{brunel2000dynamics}
Nicolas Brunel.
\newblock Dynamics of sparsely connected networks of excitatory and inhibitory
  spiking neurons.
\newblock {\em Journal of computational neuroscience}, 8(3):183--208, 2000.

\bibitem{canizo2019asymptotic}
Jos{\'e}~A. Ca{\~n}izo and Havva Yolda{\c{s}}.
\newblock Asymptotic behaviour of neuron population models structured by
  elapsed-time.
\newblock {\em Nonlinearity}, 32(2):464, 2019.

\bibitem{chevallier2015mean}
Julien Chevallier.
\newblock Mean-field limit of generalized hawkes processes.
\newblock {\em Stochastic Processes and their Applications},
  127(12):3870--3912, 2017.

\bibitem{chevallier2015microscopic}
Julien Chevallier, Mar{\'\i}a~Jos{\'e} C{\'a}ceres, Marie Doumic, and Patricia
  Reynaud-Bouret.
\newblock Microscopic approach of a time elapsed neural model.
\newblock {\em Mathematical Models and Methods in Applied Sciences},
  25(14):2669--2719, 2015.

\bibitem{diekmann2012delay}
Odo Diekmann, Stephan~A. Van~Gils, Sjoerd M.~V. Lunel, and Hans-Otto Walther.
\newblock {\em Delay equations: functional-, complex-, and nonlinear analysis},
  volume 110.
\newblock Springer Science \& Business Media, 2012.

\bibitem{DH1}
G.~Dumont, J.~Henry, and C.~O. Tarniceriu.
\newblock Noisy threshold in neuronal models: connections with the noisy leaky
  integrate-and-fire model.
\newblock {\em J. Math. Biol.}, 73(6-7):1413--1436, 2016.

\bibitem{DH2}
Gr\'{e}gory Dumont, Jacques Henry, and Carmen~Oana Tarniceriu.
\newblock A theoretical connection between the noisy leaky integrate-and-fire
  and the escape rate models: the non-autonomous case.
\newblock {\em Math. Model. Nat. Phenom.}, 15:Paper No. 59, 20, 2020.

\bibitem{gerstner2002spiking}
Wulfram Gerstner and Werner~M. Kistler.
\newblock {\em Spiking neuron models: Single neurons, populations, plasticity}.
\newblock Cambridge university press, 2002.

\bibitem{hadeler1977periodic}
K.~P. Hadeler and J.~Tomiuk.
\newblock Periodic solutions of difference-differential equations.
\newblock {\em Arch. Rational Mech. Anal.}, 65(1):87--95, 1977.

\bibitem{kang2015}
Moon-Jin Kang, Beno{\^\i}t Perthame, and Delphine Salort.
\newblock Dynamics of time elapsed inhomogeneous neuron network model.
\newblock {\em Comptes Rendus Mathematique}, 353(12):1111--1115, 2015.

\bibitem{ly2009spike}
Cheng Ly and Daniel Tranchina.
\newblock Spike train statistics and dynamics with synaptic input from any
  renewal process: a population density approach.
\newblock {\em Neural Computation}, 21(2):360--396, 2009.

\bibitem{michel2005general}
Philippe Michel, St{\'e}phane Mischler, and Beno{\^\i}t Perthame.
\newblock General relative entropy inequality: an illustration on growth
  models.
\newblock {\em Journal de math{\'e}matiques pures et appliqu{\'e}es},
  84(9):1235--1260, 2005.

\bibitem{mischler2018}
St{\'e}phane Mischler and Qilong Weng.
\newblock Relaxation in time elapsed neuron network models in the weak
  connectivity regime.
\newblock {\em Acta Applicandae Mathematicae}, 157(1):45--74, 2018.

\bibitem{murray2007mathematical}
James~D. Murray.
\newblock {\em Mathematical biology: I. An introduction}, volume~17.
\newblock Springer Science \& Business Media, 2007.

\bibitem{PPD}
Khashayar Pakdaman, Beno{\^\i}t Perthame, and Delphine Salort.
\newblock Dynamics of a structured neuron population.
\newblock {\em Nonlinearity}, 23(1):55--75, 2010.

\bibitem{PPD2}
Khashayar Pakdaman, Beno{\^\i}t Perthame, and Delphine Salort.
\newblock Relaxation and self-sustained foscillations in the time elapsed
  neuron network model.
\newblock {\em SIAM J. Appl. Math.}, 73(3):1260--1279, 2013.

\bibitem{PPD3}
Khashayar Pakdaman, Beno{\^\i}t Perthame, and Delphine Salort.
\newblock Adaptation and fatigue model for neuron networks and large time
  asymptotics in a nonlinear fragmentation equation.
\newblock {\em J. Math. Neurosci.}, 4:Art. 14, 26, 2014.

\bibitem{perthame2006transport}
Beno{\^\i}t Perthame.
\newblock {\em Transport equations in biology}.
\newblock Springer Science \& Business Media, 2006.

\bibitem{pham1998activity}
Jo{\"e}l Pham, Khashayar Pakdaman, Jean Champagnat, and Jean-Fran{\c{c}}ois
  Vibert.
\newblock Activity in sparsely connected excitatory neural networks: effect of
  connectivity.
\newblock {\em Neural Networks}, 11(3):415--434, 1998.

\bibitem{smith2011introduction}
Hal~L. Smith.
\newblock {\em An introduction to delay differential equations with
  applications to the life sciences}, volume~57.
\newblock Springer New York, 2011.

\bibitem{ST2020}
Nicolas Torres and Delphine Salort.
\newblock Dynamics of neural networks with elapsed time model and learning
  processes.
\newblock {\em Acta Appl. Math.}, 170:1065--1099, 2020.

\end{thebibliography}
\bibliographystyle{plain}
\end{document}